\documentclass[a4paper,reqno]{amsart}

\textheight 220mm
\textwidth 160mm
\hoffset -16mm

\usepackage{graphicx}
\usepackage{amsmath}
\usepackage{amssymb}
\input xy
\xyoption{all}

\usepackage{color}

\numberwithin{equation}{section}

\theoremstyle{plain}

\newtheorem{thm}{Theorem}[section]
\newtheorem{cor}[thm]{Corollary}
\newtheorem{lem}[thm]{Lemma}
\newtheorem{prop}[thm]{Proposition}

\newtheorem{defn}[thm]{Definition}
\newtheorem{exm}[thm]{Example}

\theoremstyle{remark}
\newtheorem{rem}[thm]{Remark}

\newdir{ >}{{}*!/-5pt/\dir{>}}

\renewcommand{\mod}{\operatorname{mod}\nolimits}

\newcommand{\add}{\operatorname{add}\nolimits}

\newcommand{\id}{\operatorname{id}\nolimits}
\newcommand{\Hom}{\operatorname{Hom}\nolimits}

\newcommand{\End}{\operatorname{End}\nolimits}

\newcommand{\Ker}{\operatorname{Ker}\nolimits}
\newcommand{\Ext}{\operatorname{Ext}\nolimits}

\newcommand{\op}{\operatorname{op}\nolimits}

\newcommand{\Cone}{\operatorname{Cone}\nolimits}
\newcommand{\CoCone}{\operatorname{CoCone}\nolimits}

\newcommand{\M}{\mathcal M}

\newcommand{\B}{\mathcal B}

\newcommand{\oB}{\overline{\B}}
\newcommand{\U}{\mathcal U}
\newcommand{\V}{\mathcal V}
\newcommand{\W}{\mathcal W}
\newcommand{\h}{\mathcal H}
\newcommand{\s}{\mathcal S}
\newcommand{\T}{\mathcal T}
\newcommand{\D}{\mathcal D}

\newcommand{\R}{\mathcal R}

\newcommand{\C}{\mathcal C}

\newcommand{\EE}{\mathbb E}
\newcommand{\STUV}{((\s,\T),(\U,\V))}

\newcommand{\svecv}[2]{\left(\begin{smallmatrix}
      #1 \\
      #2
    \end{smallmatrix}\right)}

\newcommand{\svech}[2]{\left(\begin{smallmatrix}
      #1 & #2
\end{smallmatrix}\right)}

\renewcommand{\emph}{\textit}
\renewcommand{\phi}{\varphi}

\begin{document}

\title{Localizations of the hearts of cotorsion pairs associated with mutations}
\author[Yu Liu]{Yu Liu$^\ast$}
\thanks{$^\ast$Department of Mathematics, Southwest Jiaotong University, Chengdu 611756, China, Email: liuyu86@swjtu.edu.cn, Telephone: +86-15910238534}
\thanks{2010 Mathematics Subject Classification： 16G99, 18E99}


\begin{abstract}
In this article, we study localizations of hearts of cotorsion pairs $(\U,\V)$ where $\U$ is rigid on an extriangulated category $\B$. The hearts of such cotorsion pairs are equivalent to the functor categories over the stable category of $\U$ ($\mod \underline \U$). Inspired by Marsh and Palu \cite{MP}, we consider the mutation (in the sense of \cite{IY}) of $\U$ that induces a cotorsion pair $(\U',\V')$. Generally speaking, the hearts of $(\U,\V)$ and $(\U',\V')$ are not equivalent to each other, but we will give a generalized pseudo-Morita equivalence between certain localizations of their hearts. 
\end{abstract}

\keywords{localization; cotorsion pair; heart; mutation; equivalent}

\maketitle


\section{Introduction}

When we say localization in this article, we mean Gabriel-Zisman localization which is introduced in \cite{GZ}. A well-known example of such localization is the bounded derived category of a module category, it is a localization of homotopy category of complexes. An example on triangulated categories is given in \cite{BM}. They proved that the category of finite-dimensional modules over the endomorphism algebra of a rigid object in a Hom-finite triangulated category is equivalent to the localization of the category with respect to a certain class of morphisms. More localizations are discussed in \cite{BM2} and also in \cite{MP}.

Exact categories are used widely in representative theory and according to the well-known result by Happel, the stable category of a Frobenius category (a special case of exact category) is triangulated \cite{Ha}. One may ask if we can investigate the similar kinds of  localizations on exact categories. Since extriangulated category \cite{NP} generalizes both triangulated and exact category, it is reasonable to study the subject on this more general structure.

In this article, let $k$ be a field, $(\B,\mathbb{E},\mathfrak{s})$ be an extriangulated category defined in \cite{NP} (see $\S2$ of \cite{NP} for details). Any subcategory discussed in this article will be full, additive and closed under direct sums and isomorphisms. Let $\mathcal P$ (resp.  $\mathcal I$) be the subcategory of projectives (resp. injectives). For a subcategory $\C$, let $\C^{\bot_1}=\{B\in\B \text{ }| \text{ } \EE(\C,B)=0 \}$ and ${^{\bot_1}}\C=\{B\in\B \text{ }| \text{ } \EE(B,\C)=0 \}$.

We first recall the definition of cotorsion pair and its heart.

\begin{defn}\label{extriangulated}
Let $\U$ and $\V$ be subcategories of $\B$ which are closed under direct summands. We call $(\U,\V)$ a \emph{cotorsion pair} if it satisfies the following conditions.
\begin{itemize}
\item[(a)] $\EE(\U,\V)=0$.
\item[(b)] For any object $B\in \B$, there exist two conflations
\begin{align*}
V_B\rightarrowtail U_B\twoheadrightarrow B,\quad
B\rightarrowtail V^B\twoheadrightarrow U^B
\end{align*}
satisfying $U_B,U^B\in \U$ and $V_B,V^B\in \V$.
\end{itemize}
We denote the subcategory of all the objects $B$ such that $U_B,V^B\in \U\cap \V$ in the conflations in (b) by $\h$. We call the ideal quotient $\h/(\U\cap \V)$ the heart of $(\U,\V)$. It is an abelian category by \cite[Theorem 3.2]{LN}.
\end{defn}

\begin{defn}
Let $\B',\B''$ be two subcategories of $\B$, let $$\Cone(\B',\B'')=\{ X\in \B \text{ } | \text{ } X \text{ admits a conflation } B'\rightarrowtail B''\twoheadrightarrow X,B'\in \B',B''\in \B''\},$$
$$\CoCone(\B',\B'')=\{ X\in \B \text{ } | \text{ } X \text{ admits a conflation } X\rightarrowtail B'\twoheadrightarrow B'',B'\in \B',B''\in \B''\}.$$

\end{defn}

When $\B$ has enough projecitves and injectives, a rigid subcategory $\C$ which is controvariantly finite and contains $\mathcal P$ induces a cotorsion pair $(\C,\C^{\bot_1})$ (see Lemma \ref{rigidcp} for details), the functor category (see \cite{Au}) $\mod (\C/\mathcal P)$ is equivalent to the heart of $(\C,\C^{\bot_1})$. Let $\D\subset \C$, when we consider the mutation of $\C$: $\C'=\CoCone(\D,\C)\cap \D^{\bot_1}$ (see \cite[Definition 2.5]{IY}) that induces a cotorsion pair $(\C',\C'^{\bot_1})$ where $\C'$ is also rigid, we can investigate the relation of two functor categories $\mod (\C/\mathcal P)$ and $\mod (\C'/\mathcal P)$ by studying the hearts. We know if $\C^{\bot_1}=\C'^{\bot_1}$, the hearts are equivalent \cite[Proposition 3.12]{LN}. But here it is obvious not the case, since $\C^{\bot_1}=\C'^{\bot_1}$ implise $\C=\C'$.

Although these hearts (hence the functor categories) are not equivalent in general, we can consider the localizations of the hearts, since on this level we can find an equivalence. In this article (compare with \cite{MP}), we choose the language of hearts of cotorsion pairs since it simplifies some proofs, it also makes arguments simpler when we deal with general subcategories compare with the ones obtained from objects. We will prove the following theorem. 

\begin{thm}
Let $(\B,\mathbb{E},\mathfrak{s})$ be an extriangulated category with enough projectives and enough injectives. Let $\C,\C',\D$ be rigid subcategories such that $\D\subseteq\C\cap\C'$. Assume we have three pairs of cotorsion pairs $((\C,\C^{\bot_1}),(\C^{\bot_1},\M))$, $(({\D},{\D}^{\bot_1}),({\D}^{\bot_1},{\mathcal N}))$, $((\C',\C'^{\bot_1}),(\C'^{\bot_1},\M'))$ such that $\CoCone({\D},\C)=\CoCone(\C',{\D})$ and $\Cone({\mathcal N},\M)=\Cone(\M',\mathcal N)$. Let
\begin{itemize}
\item[(a)] $\h/\C$ be the heart of $(\C,\C^{\bot_1})$. Denote $(\h\cap {\D}^{\bot_1})/\C$ by $\mathcal A$. Let $\s_{\mathcal A}$ be the class of epimorphisms in $\h/\C$ whose kernel belong to $\mathcal A$.
\item[(b)] $\h'/\M'$ be the heart of $(\C'^{\bot_1},\M')$. Denote $(\h' \cap {\D}^{\bot_1})/\M'$ by $\mathcal A'$. Let $\s_{\mathcal A'}$ be the class of monomorphisms in $\h'/\M'$ whose cokernel belong to $\mathcal A'$.
\end{itemize}
Then we have the following equivalences:
$$(\h/\C)_{\s_{\mathcal A}} \simeq \CoCone({\D},\C)/\C' \simeq \Cone({\mathcal N},\M)/\M \simeq (\h'/\M')_{\s_{\mathcal A'}}.$$
\end{thm}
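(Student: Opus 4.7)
My strategy is to prove the three equivalences in the chain individually and compose them. The outer two equivalences are handled by cohomological functors associated to the two cotorsion pairs, while the middle equivalence uses a direct mutation construction inside $\B$. The hypotheses $\CoCone(\D,\C)=\CoCone(\C',\D)$ and $\Cone(\M',\mathcal N)=\Cone(\mathcal N,\M)$ ensure that the two outer sides are treated symmetrically, so I will describe the left case in detail and only sketch the right.

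For the leftmost equivalence $(\h/\C)_{\s_{\mathcal A}}\simeq \CoCone(\D,\C)/\C'$, my main tool is a cohomological functor $H\colon \B\to\h/\C$ attached to the cotorsion pair $(\C,\C^{\bot_1})$, built as in \cite{LN} by combining the two approximation conflations of a given object to land in the heart. I restrict $H$ to $\CoCone(\D,\C)$ and verify three points. First, if $X\in\C'=\CoCone(\D,\C)\cap \D^{\bot_1}$, then the cotorsion-pair approximation $V_X\rightarrowtail C_X\twoheadrightarrow X$ together with $X\in\D^{\bot_1}$ forces $H(X)\in \h\cap \D^{\bot_1}$, so that $H(X)\in\mathcal A$ and $H(X)\twoheadrightarrow 0$ lies in $\s_{\mathcal A}$; hence $H$ descends to a functor $\overline{H}\colon \CoCone(\D,\C)/\C'\to (\h/\C)_{\s_{\mathcal A}}$. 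Second, essential surjectivity: for any $h\in\h$, an approximation conflation from the cotorsion pair $(\D,\D^{\bot_1})$ produces an object $X\in \CoCone(\D,\C)$ with $h\cong H(X)$ in the localization. Third, full faithfulness is established via a roof calculus, after verifying that $\s_{\mathcal A}$ admits a calculus of fractions, by identifying its roofs with morphism classes in $\CoCone(\D,\C)/\C'$.

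For the middle equivalence $\CoCone(\D,\C)/\C'\simeq \Cone(\mathcal N,\M)/\M$, I construct a direct mutation functor. Given $X\in \CoCone(\D,\C)$ with conflation $X\rightarrowtail D\twoheadrightarrow C$, I combine it with an approximation conflation involving $\mathcal N$ and $\M$ (coming from the cotorsion pair $(\C^{\bot_1},\M)$) and apply the $3\times 3$-lemma of extriangulated categories from \cite{NP} to extract an object $Y\in\Cone(\mathcal N,\M)$ depending functorially on $X$ modulo $\C'$ on the source and $\M$ on the target. The symmetric hypothesis $\Cone(\M',\mathcal N)=\Cone(\mathcal N,\M)$ provides an inverse construction, so the two functors are quasi-inverse after quotienting. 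The rightmost equivalence $\Cone(\mathcal N,\M)/\M\simeq (\h'/\M')_{\s_{\mathcal A'}}$ follows by the exact dual of the leftmost argument, applied to the cotorsion pair $(\C'^{\bot_1},\M')$.

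The main obstacle I anticipate is the full faithfulness of $\overline{H}$. Verifying that $\s_{\mathcal A}$ admits a calculus of fractions, and matching its roof morphisms with morphism classes in $\CoCone(\D,\C)/\C'$, requires delicate diagram chases; since the ambient category is extriangulated rather than triangulated, no genuine shift functor is available, and every lifting step must be performed via explicit conflations. The hypothesis $\D\subseteq\C\cap\C'$ will be invoked repeatedly in these chases to keep the auxiliary objects inside the relevant subcategories.
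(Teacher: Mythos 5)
Your overall architecture (two outer equivalences via cohomological functors, a middle one inside $\B$) matches the paper's, but three of your key steps are either wrong as stated or missing their actual content. First, essential surjectivity of $\overline{H}$: an approximation conflation from the cotorsion pair $(\D,\D^{\bot_1})$ for $h\in\h$ has its approximating object in $\D\subseteq\C$, and $H$ kills $\C$, so this object cannot represent $h$ in $(\h/\C)_{\s_{\mathcal A}}$ (you would be asking $0\to h$ to lie in $\s_{\mathcal A}$). What is actually needed is a deflation $f\colon Y\twoheadrightarrow X$ which is a right $\CoCone(\D,\C)$-approximation with cocone in $\D^{\bot_1}$; the paper builds this (its Proposition \ref{lem2.2}) by a two-step construction through syzygy approximations $\Omega\C$ and $\Omega\D$ and projective presentations, and this proposition is also the engine behind fullness, faithfulness and the later natural isomorphisms — nothing in your sketch produces it. Second, full faithfulness: you defer everything to an unverified calculus of fractions for $\s_{\mathcal A}$ and yourself flag it as the main obstacle; that is exactly the step you have not supplied, and it is not needed. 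The paper proves fullness by an explicit lifting of morphisms along the conflation realizing a given $\s_{\mathcal A}$-morphism (using Lemma \ref{exactsq}, Lemma \ref{syzygyepi} and Remark \ref{factorsthroughP}), and proves faithfulness by observing that the cohomological functor $H_0$ of the second cotorsion pair $(\D,\D^{\bot_1})$ inverts all morphisms in $\s_{\mathcal A}$, so the universal property of Gabriel--Zisman localization gives a functor $J\colon(\h/\C)_{\s_{\mathcal A}}\to\h_0/\D$ through which one detects that $u-v$ factors through $\D^{\bot_1}$, hence through $\C'$ by Corollary \ref{cap}. Without either this device or an actual fraction calculus, your $\overline{H}$ is not shown to be an equivalence.

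Third, the middle equivalence $\CoCone(\D,\C)/\C'\simeq\Cone(\mathcal N,\M)/\M$: asserting that a $3\times3$-lemma construction yields an object ``depending functorially on $X$ modulo $\C'$ and $\M$'' and that the symmetric hypothesis ``provides an inverse construction'' is precisely the content to be proved — well-definedness on morphisms modulo the ideals and the quasi-inverse property do not come for free. The paper obtains the two functors as the adjoints $\sigma^{+}$ and $\sigma^{-}$ (from \cite[Definition 2.21]{LN}) of the inclusions $\h'_{\mathcal N}/\M\hookrightarrow\B/\M$ and $\h_\D/\C'\hookrightarrow\B/\C'$ attached to the twin cotorsion pairs $((\C^{\bot_1},\M),(\D^{\bot_1},\mathcal N))$ and $((\D,\D^{\bot_1}),(\C',\C'^{\bot_1}))$, and its proof that $K'K\simeq\id$ is not independent of the outer steps: it uses the already-established equivalence $F'$ (via fullness and the invertibility of $L_{\s_{\mathcal A}}H$ applied to reflection and coreflection conflations) to produce the natural isomorphism. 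So the three equivalences are not proved separately and composed, as your plan assumes; the middle one leans on the first, and your proposal supplies no substitute mechanism for it.
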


Note that condition $\CoCone({\D},\C)=\CoCone(\C',{\D})$ implies $\C'=\CoCone({\D},\C)\cap \D^{\bot_1}$. This result allows us to study the similar mutations as in \cite{MP} on exact categories, where usually we do not have Serre functors. It generalizes the results by Marsh and Palu (see \cite[Theorem 2.9, 3.2]{MP}) for the following reason:

\begin{rem}
Let $\B$ be a Krull-Schmidt, $k$-linear, Hom-finite triangulated category with suspension functor $\Sigma$. Let $C$ be a rigid object and $D$ is a direct summand of $C$, we have $C=D\oplus R$. Let $\C=\add C$ and $\D=\add D$, we have the following triangle $R^*\to D_0\xrightarrow{f} R\to \Sigma R^*$ where $f$ is a minimal right $\D$-approximation. Let $C'=D\oplus R^*$ and $\C'=\add C'$, we assume $\C'$ is rigid. By \cite[Lemma 2.7]{MP}, we have $\CoCone({\D},\C)=\CoCone(\C',{\D})$. Under the assumptions for $\B$, we have cotorsoin pairs $(\C,\C^{\bot_1})$, $({\D},{\D}^{\bot_1})$ and $(\C',\C'^{\bot_1})$. The module category of the endomorphism algebra $\End_{\B}(C)^{\op}$ (resp. $\End_{\B}(C')^{\op}$) is equivalent to the heart of $(\C,\C^{\bot_1})$ (resp. $(\C'^{\bot_1},\M')$) (see \cite[Proposition 4.15]{LN}). Moreover, if $\B$ has a Serre functor $S$, then we also have cotorsion pairs $(\C^{\bot_1},\Sigma^{-2}S\C)$, $(\D^{\bot_1},\Sigma^{-2}S\D)$ and $(\C'^{\bot_1},\Sigma^{-2}S\C')$. One can check that $\Cone(\Sigma^{-2}S\D,\Sigma^{-2}S\C)=\Cone(\Sigma^{-2}S\C',\Sigma^{-2}S\D)$.
\end{rem}

For the readers who are familiar with extriangulated category, just consider the case when $\B$ is an exact category (or abelian category), then $\EE$ is just $\Ext^1_B$, conflations are just short exact sequences. Everything works in the same way. 
 
In section 2, we introduce necessary backgroud knowledge of cotorsion pairs and prove some lemmas which will be used later. In section 3, we study a localization of the heart of a cotorsion pair related to the mutation, which is given as $(\h/\C)_{\s_{\mathcal A}} \simeq \CoCone(\D,\C)/\C'$ in the main theorem. In fact we show the result in a more general setting: we only need $\C'=\CoCone(\D,\C)\cap \D^{\bot_1}$ without assuming its rigidity. In section 4, we prove our main theorem. In section 5 we discuss some localizations of $\B$ related to our main theorem. In the last section, we give some examples of our theorem.

\subsection*{Acknowledgments}
The author would like to thank Martin Herschend, Osamu Iyama and Bin Zhu for their helpful advices and corrections.

\section{Preliminaries}

Throughout this article, let $(\B,\mathbb{E},\mathfrak{s})$ be an extriangulated category defined in \cite{NP} (see $\S2$ of \cite{NP} for details). Let $\mathcal P$ (resp.  $\mathcal I$) be the subcategory of projectives (resp. injectives). Let $\C\supset \D$ be subcategories of $\B$.

We first recall the following proposition (\cite[Proposition 1.20]{LN}), which (also the dual of it) will be used many times in the article.

\begin{prop}\label{1.20}
Let $A\overset{x}{\rightarrowtail}B\overset{y}{\twoheadrightarrow}C\overset{\delta}{\dashrightarrow}$ be any $\EE$-triangle, let $f\colon A\rightarrow D$ be any morphism, and let $D\overset{d}{\rightarrowtail}E\overset{e}{\twoheadrightarrow}C\overset{f_{\ast}\delta}{\dashrightarrow}$ be any $\EE$-triangle realizing $f_{\ast}\delta$. Then there is a morphism $g$ which gives a morphism of $\EE$-triangles
$$\xymatrix{
A \ar@{ >->}[r]^{x} \ar[d]_f &B \ar@{->>}[r]^{y} \ar[d]^g &C \ar@{=}[d]\ar@{-->}[r]^{\delta}&\\
D \ar@{ >->}[r]_{d} &E \ar@{->>}[r]_{e} &C\ar@{-->}[r]_{f_{\ast}\delta}&
}
$$
and moreover, the sequence $A\overset{\svecv{f}{x}}{\rightarrowtail}D\oplus B\overset{\svech{d}{-g}}{\twoheadrightarrow}E\overset{e^{\ast}\delta}{\dashrightarrow}$ becomes an $\EE$-triangle.
\end{prop}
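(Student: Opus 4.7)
The plan is to derive the proposition above by directly invoking the foundational axioms of an extriangulated category in the sense of Nakaoka-Palu, together with the basic functoriality of the bifunctor $\EE$.

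For the first assertion (existence of $g$ producing a morphism of $\EE$-triangles), the key observation is that $f_{\ast}\delta$ is by definition the image of $\delta$ under the natural map $\EE(C,f)\colon\EE(C,A)\to\EE(C,D)$. Thus the pair $(f,\id_C)$ constitutes a morphism of extensions $\delta\to f_{\ast}\delta$. The axiom (ET3) of Nakaoka-Palu asserts that every such morphism of extensions lifts to a morphism of the corresponding $\EE$-triangles; applied here it yields $g\colon B\to E$ with $d\circ f=g\circ x$ and $e\circ g=y$, giving the desired commutative ladder.

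For the second assertion (that $A\xrightarrow{\svecv{f}{x}}D\oplus B\xrightarrow{\svech{d}{-g}}E\overset{e^{\ast}\delta}{\dashrightarrow}$ is an $\EE$-triangle, i.e.\ that the resulting square is homotopy cartesian), I would apply axiom (ET4)$^{\mathrm{op}}$ to the pair of deflations $y\colon B\twoheadrightarrow C$ and $e\colon E\twoheadrightarrow C$, which share their right-hand vertex. The dual octahedron-type construction produces a commutative $3\times 3$ diagram of $\EE$-triangles whose entries include $A$, $B$, $D$, $E$ in the required positions and from whose middle row one can read off the sequence $A\to D\oplus B\to E$ with the stated morphisms. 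The identification of the extension class with $e^{\ast}\delta$ is then forced by naturality of $\EE$ under $e\colon E\twoheadrightarrow C$.

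The main technical obstacle is the bookkeeping in the second step. Although a homotopy cartesian square of this type is standard in triangulated and exact settings, in the extriangulated setting one must select the correct variant of the Nakaoka-Palu octahedron, track the sign convention (hence the $-g$) that ensures $d\circ f-g\circ x=0$, and invoke functoriality of $\EE$ in both arguments to pin down $e^{\ast}\delta$. A more explicit alternative would be to realize $\svecv{f}{x}$ as the inflation of the pushout $\EE$-triangle obtained from the split inflation $A\xrightarrow{\svecv{1}{0}}A\oplus B$ and the morphism $(f,x)$, and then identify its deflation with $\svech{d}{-g}$ using the map $g$ from the first step; this avoids the explicit $3\times 3$ diagram but still demands the same extension-class computation.
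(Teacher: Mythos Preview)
The paper does not prove this proposition: it is quoted from \cite[Proposition~1.20]{LN} (which in turn rests on \cite{NP}) without argument, so there is no proof in the present paper to compare your sketch against. Let me nonetheless comment on the sketch itself.

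The first step is correct in spirit, though the attribution is off. The lift of the morphism of extensions $(f,\id_C)\colon\delta\to f_\ast\delta$ to a morphism of realizing $\EE$-triangles is guaranteed already by the definition of $\mathfrak{s}$ being an additive realization \cite[Definition~2.9]{NP}, not by axiom (ET3); the latter runs in the converse direction, producing a morphism of extensions from a partial morphism of triangles.

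The second step contains a genuine gap. Axiom (ET4)$^{\mathrm{op}}$ takes as input a pair of \emph{composable} deflations---one's codomain equal to the other's domain---and not two deflations $y\colon B\twoheadrightarrow C$, $e\colon E\twoheadrightarrow C$ that merely share a target. No octahedron-type $3\times3$ diagram is produced from such a cospan by the extriangulated axioms alone, so your proposed application of (ET4)$^{\mathrm{op}}$ does not get off the ground. The actual argument (in \cite{NP}, on which \cite{LN} relies) applies (ET4) to a carefully chosen pair of composable \emph{inflations}, one of them split, and then identifies the resulting $\EE$-triangle with the desired one; it is noticeably more intricate than a single invocation of an octahedral axiom. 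Your alternative paragraph gestures toward this kind of idea but is not a well-formed construction as stated: pushing the split extension $A\to A\oplus B\to B$ forward along any morphism out of $A$ again yields a split extension (since $h_\ast 0=0$), so one cannot arrive at an $\EE$-triangle with deflation onto $E$ that way without substantial further input.
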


Although most of the time we will deal with cotorsion pairs, it is still necessary to introduce the folloiwng more general concept used in the proof of our main theorem.


\begin{defn}\label{Cctp}
A pair of cotorsion pairs $\STUV$ on $\B$ is called a \emph{twin cotorsion pair} if $\s\subseteq \U$.

Remark that any cotorsion pair $(\U,\V)$ gives a twin cotorsion pair $((\U,\V),(\U,\V))$. Thus a cotorsion pair can be regarded as a special case of a twin cotorsion pair, satisfying $\s=\U$ and $\T=\V$.
\end{defn}

\begin{rem}\label{C3}
For any cotorsion pair $(\U,\V)$ on $\B$, the following holds.
\begin{itemize}
\item[(a)] An object $B\in \U$ if and only if $\EE(B,\V)=0$.

\item[(b)] An object $B\in \V$ if and only if $\EE(\U,B)=0$.

\item[(c)] Subcategories $\U$ and $\V$ are closed under extension.

\item[(d)] $\mathcal P \subseteq \U$ and $\mathcal I \subseteq \V$.

\end{itemize}
\end{rem}

\begin{defn}\label{C5}
For any twin cotorsion pair $\STUV$, put $\W=\T\cap \U$.
\begin{itemize}
\item[(a)] Subcategory $\B^+$ is defined to be the full subcategory of $\B$, consisting of objects $B$ which admit conflations $V_B\rightarrowtail W_B\twoheadrightarrow B$ where $W_B\in \W$ and $V_B\in \V$.
\item[(b)] Subcategory $\B^-$ is defined to be the full subcategory of $\B$, consisting of objects $B$ which admit conflations $B\rightarrowtail W^B\twoheadrightarrow S^B$ where $W^B\in \W$ and $S^B\in \s$.
\end{itemize}
\end{defn}

\begin{defn}\label{C6}
Let $\STUV$ be a twin cotorsion pair on $\B$, and write the ideal quotient of $\B$ by $\W$ as $\oB=\B/\W$. For any morphism $f\in \Hom_\B(X,Y)$, we denote its image in $ \Hom_{\oB}(X,Y)$ by $\overline f$.
For any full additive subcategory $\mathcal B_1$ of $\B$ containing $\W$, similarly we put $\overline{\B_1}=\B_1/\W$. This is a full subcategory of $\oB$ consisting of the same objects as $\mathcal B_1$.

Let $\h=\B^+\cap\B^-$. Since $\h\supseteq \W$, we obtain a subcategory $\overline{\h}\subseteq\oB$, which we call the \emph{heart} of the twin cotorsion pair. It is semi-abelian by \cite[Theorem 2.32]{LN}. In particular, the heart of the twin cotorsion pair $((\U,\V),(\U,\V))$ is the heart of $(\U,\V)$.
\end{defn}

Let $\B_1*\B_2=\{B \in \B \text{ }|\text{ } B \text{ admits a conflation } B_1\rightarrowtail B\twoheadrightarrow B_2,B_1\in \B_1,B_2\in \B_2\}$, for a cotorsion pair $(\U,\V)$ and its heart, according to \cite[Theorem 3.2, 3.5, Corollary 3.8]{LN}, we have the following theorem.

\begin{thm}\label{cohomological}
Let $(\U,\V)$ be a cotorsion pair, then its hearts $\overline \h$ is abelian. Moreover, there exists an additive functor $H:\B\to \overline \h$ such that
\begin{itemize}
\item[$\bullet$] $H|_{\h}=\pi|_{\h}$, where $\pi:B\to \overline \B$ is the quotient functor;
\item[$\bullet$] for any object $X\in \B$, $H(X)=0$ if and only if $X\in \add(\U*\V)$. 
\item[$\bullet$] for any conflation
$\xymatrix{A \ar@{ >->}[r]^{f} &B \ar@{->>}[r]^{g} &C}$ in $\B$, the sequence $H(A)\xrightarrow{H(f)} H(B)\xrightarrow{H(g)} H(C)$ is exact in $\overline \h$.
\end{itemize}

\end{thm}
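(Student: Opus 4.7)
The abelianness of $\overline{\h}$ is the specialisation of the general twin cotorsion pair theorem \cite[Theorem 2.32]{LN} to the trivial twin $((\U,\V),(\U,\V))$: here $\s = \U$, $\T = \V$ and $\W = \U \cap \V$, and the two candidate image/coimage factorisations appearing in the semi-abelian structure collapse into a single one, upgrading the heart from semi-abelian to abelian. The substantive part of the theorem is therefore the construction of the functor $H$.

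To define $H(X)$ for $X \in \B$, I would perform a canonical two-step truncation using the cotorsion pair axioms. Pick conflations $V_X \rightarrowtail U_X \twoheadrightarrow X$ and $V_X \rightarrowtail V^{V_X} \twoheadrightarrow U^{V_X}$ as in Definition \ref{extriangulated}, then apply Proposition \ref{1.20} along the common term $V_X$ to obtain an intermediate object $E_1$ sitting in a conflation $V^{V_X} \rightarrowtail E_1 \twoheadrightarrow X$. Perform the dual truncation on $E_1$ to produce an object $E \in \h$, and set $H(X)$ to be the image of $E$ in $\overline{\h}$. The construction is independent of all choices up to canonical isomorphism in $\overline{\h}$: any two realisations $E, E'$ differ by a morphism whose obstruction lives in an $\EE$-group killed by $\EE(\U,\V)=0$. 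The same vanishing makes $H$ functorial on morphisms, since any $f\colon X \to Y$ lifts to the constructed objects uniquely modulo $\W$.

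The three stated properties are then checked in turn. If $X \in \h$, the construction can be carried out with $U_X, V^{V_X} \in \W$, so $E \cong X$ in $\overline{\h}$, giving $H|_{\h} = \pi|_{\h}$. If $X \in \U$, choose $U_X = X$ with identity deflation, forcing $V_X = 0$ and hence $E \in \W$, so $H(X)=0$; dually for $X \in \V$; any extension $U \rightarrowtail X \twoheadrightarrow V$ with $U \in \U$ and $V \in \V$ then satisfies $H(X)=0$ by sandwiching between zeros via the third property. For the cohomological property, given a conflation $A \rightarrowtail B \twoheadrightarrow C$, iterate Proposition \ref{1.20} in a horseshoe-type pattern to obtain a $3 \times 3$ grid of compatible truncations whose middle row yields a conflation $E_A \rightarrowtail E_B \twoheadrightarrow E_C$ with terms in $\h$; the already-established abelian structure on $\overline{\h}$ converts this into the required exact sequence at $H(B)$. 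The main obstacle I expect is the well-definedness of $H$, both on objects and on morphisms: the two-step truncation involves many choices at each stage, and collapsing the resulting ambiguity to zero in $\overline{\h}$ requires a careful, systematic use of $\EE(\U,\V)=0$ in repeated lifting/obstruction arguments.
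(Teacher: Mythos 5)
First, note that the paper does not prove this theorem: it is quoted verbatim from \cite[Theorems 3.2, 3.5, Corollary 3.8]{LN}, so any argument here is necessarily a reconstruction of the Liu--Nakaoka construction, in which $H$ is the composite of the reflection $\sigma^+$ onto $\B^+$ with the coreflection $\sigma^-$ onto $\B^-$ (exactly the functors recalled in Section 4). Your plan is in that spirit, but the concrete two-step truncation you propose produces the wrong object. Pushing out $V_X\rightarrowtail U_X\twoheadrightarrow X$ along $V_X\rightarrowtail V^{V_X}$ gives, by the (ET4)-type $3\times 3$ diagram, not only the conflation $V^{V_X}\rightarrowtail E_1\twoheadrightarrow X$ but also a conflation $U_X\rightarrowtail E_1\twoheadrightarrow U^{V_X}$; since $\U$ is extension-closed, $E_1\in\U$. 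The dual truncation applied to $E_1$ then yields an object $E$ sitting simultaneously in a conflation with both end terms in $\U$ and in one with both end terms in $\V$, so $E\in\W=\U\cap\V$ and its image in $\overline{\h}$ is zero. Thus your $H$ would vanish identically, contradicting $H|_{\h}=\pi|_{\h}$ whenever the heart is nonzero (e.g.\ Example \ref{ex1}); in particular your claim that $E\cong X$ in $\overline{\h}$ for $X\in\h$ cannot hold. The correct first step co-resolves the \emph{middle} term, not the kernel: choose $U_X\rightarrowtail V^{U_X}\twoheadrightarrow U^{U_X}$, observe $V^{U_X}\in\W$ because $\U$ is extension-closed, and apply (ET4) to the composite inflation $V_X\rightarrowtail U_X\rightarrowtail V^{U_X}$ to obtain $X\rightarrowtail X^{+}\twoheadrightarrow U^{U_X}$ together with $V_X\rightarrowtail V^{U_X}\twoheadrightarrow X^{+}$, which puts $X^{+}\in\B^+$; then dually coreflect $X^{+}$ into $\B^-$.

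Beyond this structural error, the remaining steps are asserted rather than proved at exactly the points where \cite{LN} has to work. The abelianness of $\overline{\h}$ does not follow by letting the two factorisations of the semi-abelian structure of \cite[Theorem 2.32]{LN} ``collapse''; it is a separate theorem (\cite[Theorem 3.2]{LN}) with its own argument. For the second bullet you only address the direction $X\in\add(\U*\V)\Rightarrow H(X)=0$; the converse is part of the statement and is not touched. For the third bullet, a conflation $E_A\rightarrowtail E_B\twoheadrightarrow E_C$ with all terms in $\h$ is not automatically sent by $\pi$ to a sequence exact at $E_B$ in $\overline{\h}$ --- establishing exactness in the middle for an arbitrary conflation is the main technical content of \cite[Corollary 3.8]{LN} (cf.\ \cite{AN}) and requires genuine diagram arguments, not just ``the abelian structure converts this''. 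Finally, the well-definedness and functoriality of $H$ modulo $\W$ are precisely where the repeated use of $\EE(\U,\V)=0$ must be carried out explicitly; as written, these are the gaps you yourself flag, and they are not closed.
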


We call $H$ the \emph{cohomological functor} associated with $(\U,\V)$.

A subcategory $\B'$ is called \emph{contravariantly finite} if any object in $\B$ admits a right $\B'$-approximation. Moreover, it is called \emph{fully} contravariantly finite if any object in $\B$ admits a right $\B'$-approximation which is also a deflation. 
Dually we can define (\emph{faithfully}) \emph{covariantly finite} subcategory.

\begin{lem}\label{rigidcp}
If $\C$ is rigid, closed under direct summands, fully contravariantly finite and $\B$ has enough injectives, then $(\C,\C^{\bot_1})$ is a cotorsion pair.
\end{lem}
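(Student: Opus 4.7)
The strategy is to verify the two conditions in Definition \ref{extriangulated}. Condition (a), $\EE(\C,\C^{\bot_1})=0$, is immediate from the definition of $\C^{\bot_1}$. Both $\C$ and $\C^{\bot_1}$ are closed under direct summands: for $\C$ this is the hypothesis, and for $\C^{\bot_1}$ it follows from additivity of $\EE(C,-)$.

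For the first conflation of (b): given $B\in\B$, full contravariant finiteness of $\C$ provides a right $\C$-approximation $u\colon U_B\twoheadrightarrow B$ that is already a deflation, so it extends to a conflation $V_B\rightarrowtail U_B\overset{u}{\twoheadrightarrow} B\dashrightarrow$. For any $C\in\C$, applying $\Hom_\B(C,-)$ produces the exact sequence
$$\Hom(C,U_B)\to\Hom(C,B)\to\EE(C,V_B)\to\EE(C,U_B).$$
The first arrow is surjective because $u$ is a right $\C$-approximation, and the last group vanishes by rigidity of $\C$, whence $\EE(C,V_B)=0$ for all $C\in\C$, i.e.\ $V_B\in\C^{\bot_1}$. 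The same exact sequence, applied to an arbitrary conflation whose outer terms lie in $\C^{\bot_1}$, shows in passing that $\C^{\bot_1}$ is closed under extensions, a fact I will need below.

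For the second conflation: use enough injectives to obtain $B\rightarrowtail I\twoheadrightarrow B'\overset{\theta}{\dashrightarrow}$ with $I\in\mathcal I\subseteq\C^{\bot_1}$. Applying the construction just carried out to $B'$ yields a conflation $V_{B'}\rightarrowtail U_{B'}\overset{u'}{\twoheadrightarrow} B'$ with $U_{B'}\in\C$ and $V_{B'}\in\C^{\bot_1}$. I then pull back $\theta$ along $u'$ using the dual of Proposition \ref{1.20}, producing an object $P$ sitting in a $3\times 3$ diagram whose middle row is a conflation $B\rightarrowtail P\twoheadrightarrow U_{B'}$ and whose middle column is a conflation $V_{B'}\rightarrowtail P\twoheadrightarrow I$. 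Since $V_{B'},I\in\C^{\bot_1}$ and $\C^{\bot_1}$ is extension-closed, $P\in\C^{\bot_1}$, and setting $V^B=P$, $U^B=U_{B'}$ supplies the required conflation.

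The main technical obstacle is assembling the full $3\times 3$ diagram in the extriangulated setting. The dual of Proposition \ref{1.20} directly gives only the conflation $B\rightarrowtail P\twoheadrightarrow U_{B'}$ together with an auxiliary conflation $P\rightarrowtail I\oplus U_{B'}\twoheadrightarrow B'$; the column conflation $V_{B'}\rightarrowtail P\twoheadrightarrow I$ then has to be recovered, either by invoking a $3\times 3$-lemma for extriangulated categories, or by applying (the dual of) Proposition \ref{1.20} a second time to the conflation $V_{B'}\rightarrowtail U_{B'}\twoheadrightarrow B'$ and the morphism $I\to B'$, and matching the two realizations via uniqueness. Once this bookkeeping is done, the rest is routine.
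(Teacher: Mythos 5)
Your proof is correct and follows essentially the same route as the paper: the first conflation comes directly from the fully contravariantly finite approximation together with rigidity, and the second is obtained by embedding $B$ into an injective and pulling back the approximation of the cokernel, with $V^B\in\C^{\bot_1}$ because it is an extension of two objects of $\C^{\bot_1}$ (your $P$ is exactly the paper's $X$). The paper simply asserts the resulting $3\times 3$ diagram ``by the axiom of extriangulated category'' (i.e.\ the dual shifted-octahedron/pullback statement in \cite{NP}), which is the justification you correctly identify as the only technical point; your first suggested route (the $3\times3$/pullback lemma) is the clean way to do it, and is preferable to matching two realizations by hand.
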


\begin{proof}
Since $\B$ has enough injectives, any object $A\in\B$ admits a conflation $\xymatrix{A \ar@{ >->}[r] &I \ar@{->>}[r] &B}$ where $I$ is injective. Since $\C$ is fully contravariantly finite, object $B$ admits a conflation $\xymatrix{B_1 \ar@{ >->}[r] &C_0 \ar@{->>}[r]^{f_0} &B}$ where $f_0$ is a right $\C$-approximation. The rigidity of $\C$ implies $B_1\in \C^{\bot_1}$. We have the following commutative diagram
$$\xymatrix{
&A \ar@{ >->}[d] \ar@{=}[r] &A \ar@{ >->}[d]\\
B_1 \ar@{=}[d] \ar@{ >->}[r] &X \ar@{->>}[r] \ar@{->>}[d] &I \ar@{->>}[d]\\
B_1 \ar@{ >->}[r] &C_0 \ar@{->>}[r]^{f_0} &B
}
$$
where $X\in \C^{\bot_1}$. Hence by definition, the pair $(\C,\C^{\bot_1})$ is a cotorsion pair.
\end{proof}

The following lemma will be used later.

\begin{lem}\label{exactsq}
Let $(\U,\V)$ be a cotorsion pair and $\overline \h$ be its heart. If we have a short exact sequence $0\to A\xrightarrow{\overline f} B\xrightarrow{\overline g} C\to 0$ in $\overline \h$, then we have a conflation $\xymatrix{A' \ar@{ >->}[r]^{f'} &B' \ar@{->>}[r]^{g'} &C}$ where $A',B',C \in \h$ such that its image by applying cohomlogical functor $H$ is isomorphic to $0\to A\xrightarrow{\overline f} B\xrightarrow{\overline g} C\to 0$.
\end{lem}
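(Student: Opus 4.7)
The plan is to lift the short exact sequence by choosing a representative of $\overline g$ and padding its domain with a $\B^+$-resolution of $C$. Pick $g : B \to C$ in $\B$ representing $\overline g$, and using $C \in \h \subseteq \B^+$, choose a conflation $V_C \rightarrowtail W_C \twoheadrightarrow C$ with $V_C \in \V$ and $W_C \in \W = \U \cap \V$. Set $B' := B \oplus W_C$ and $g' := (g, \pi_C) : B' \to C$, where $\pi_C$ is the given deflation. The morphism $g'$ is a deflation: up to the obvious automorphism of $B \oplus W_C$, it equals the composition of the deflation $(\pi_C, \mathrm{id}_B) : W_C \oplus B \twoheadrightarrow C \oplus B$ (arising from the direct sum of $V_C \rightarrowtail W_C \twoheadrightarrow C$ with the trivial conflation on $B$) with the split epimorphism $(\mathrm{id}, -g) : C \oplus B \to C$. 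Complete to a conflation $A' \xrightarrow{f'} B' \xrightarrow{g'} C$ in $\B$; by the dual of Proposition \ref{1.20} applied to $V_C \rightarrowtail W_C \twoheadrightarrow C$ and $g$, the object $A'$ additionally fits in a conflation $V_C \rightarrowtail A' \twoheadrightarrow B$.

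Next I check that $A', B' \in \h$. Membership $B' \in \h$ is immediate from $B \in \h$, $W_C \in \W \subseteq \h$, and the closure of $\h$ under direct sums. For $A' \in \B^+$, pull back $W_B \twoheadrightarrow B$ (arising from $B \in \B^+$ with conflation $V_B \rightarrowtail W_B \twoheadrightarrow B$) along the deflation $A' \twoheadrightarrow B$, obtaining an object $P$ with conflations $V_B \rightarrowtail P \twoheadrightarrow A'$ and $V_C \rightarrowtail P \twoheadrightarrow W_B$; extension-closure of $\V$ gives $P \in \V$ for free, and combining with the $\B^-$-resolution of $C$ via further applications of Proposition \ref{1.20} and axiom (ET4) upgrades $P$ to lie in $\U$ as well, hence $P \in \W$ and $A' \in \B^+$. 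A dual argument, using $B \in \B^-$ together with the $\B^+$-resolution of $C$ and the dual of (ET4), yields $A' \in \B^-$. Finally, I apply the cohomological functor $H$ of Theorem \ref{cohomological}: since $V_C, W_C \in \add(\U * \V)$, we have $H(V_C) = H(W_C) = 0$, so applying $H$ to $V_C \rightarrowtail A' \twoheadrightarrow B$ yields $0 \to H(A') \to B$, and applying $H$ to $A' \rightarrowtail B' \twoheadrightarrow C$ yields exactness of $H(A') \to B \to C$ with second arrow $\overline g$. Combining, $H(A') \cong \ker \overline g = A$ as subobjects of $B$, and the image of the conflation under $H$ is canonically isomorphic to the given SES.

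The main obstacle is the verification that $A' \in \h$: the naive pullback $P$ lies in $\V$ automatically but not in $\U$, because the object $V_C$ from a $\B^+$-resolution of $C$ need not belong to $\U$. Routing around this forces one to weave together both the $\B^+$- and $\B^-$-structures of $B$ and of $C$, and to apply the extriangulated axioms (in particular (ET4) and its dual) carefully — the same combinatorics that underlies the proof of abelian-ness for the heart in \cite{LN}.
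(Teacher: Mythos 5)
Your construction coincides with the paper's: your $A'$ is precisely the pullback $K_g$ of a deflation $W_C\twoheadrightarrow C$ (with $W_C\in\W$, kernel $V_C\in\V$) along a representative $g$ of $\overline g$, your $B'$ is $B\oplus W_C$ with $g'=\svech{g}{w_C}$, and your endgame is sound: since $H(V_C)=H(W_C)=0$, Theorem \ref{cohomological} applied to $V_C\rightarrowtail A'\twoheadrightarrow B$ and to $A'\rightarrowtail B\oplus W_C\twoheadrightarrow C$ identifies $H(A')$ with $\Ker(\overline g)=A$ compatibly with the maps, exactly as in the paper. The problem is the one step that carries the real content, namely $A'\in\h=\B^+\cap\B^-$, and there your argument has a genuine gap.

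For $A'\in\B^+$ you pull back $W_B\twoheadrightarrow B$ along $A'\twoheadrightarrow B$, obtaining $P$ with conflations $V_B\rightarrowtail P\twoheadrightarrow A'$ and $V_C\rightarrowtail P\twoheadrightarrow W_B$, and then assert that further uses of Proposition \ref{1.20} and (ET4) ``upgrade'' $P$ into $\U$. They cannot, for this $P$: since $W_B\in\W\subseteq\U$ and $V_C\in\V$, we have $\EE(W_B,V_C)=0$, so the second conflation splits and $P\cong V_C\oplus W_B$. This object lies in $\V$, but it lies in $\U$ (hence in $\W$) only if $V_C\in\U$, which there is no reason to expect; so this witness conflation simply does not exhibit $A'\in\B^+$, and no amount of (ET4) applied to it will change that. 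Your final paragraph acknowledges that the required ``weaving'' of the $\B^+$- and $\B^-$-structures is not carried out, and the dual claim $A'\in\B^-$ is in the same unproven state. What is needed at this point are the closure properties of $\B^+$ and $\B^-$ that the paper invokes, namely \cite[Lemma 2.10]{LN} (giving $K_g\in\B^+$ from $V_C\rightarrowtail K_g\twoheadrightarrow B$ with $V_C\in\V$, $B\in\B^+$) and \cite[Lemma 3.1]{LN} (giving $K_g\in\B^-$ from $K_g\rightarrowtail B\oplus W_C\twoheadrightarrow C$); their proofs proceed by a different construction (re-resolving the $\V$-term via the cotorsion pair and rebuilding the witness conflation), not by the naive pullback. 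Until you either cite these lemmas or supply such an argument, the proposal does not establish $A'\in\h$, which is the crux of the statement.
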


\begin{proof}
For morphism $g$, we have the following commutative diagram
$$\xymatrix{
V_C \ar@{ >->}[r] \ar@{=}[d] &{K_g}  \ar@{->>}[r]^{k_g} \ar[d] &B \ar[d]^g\\
V_C \ar@{ >->}[r] &{W_C} \ar@{->>}[r]_{w_C} &C}
$$
where $V_C\in \V$ and $W_C\in \W$. Then we obtain a conflation
$$\xymatrix{K_g \ar@{ >->}[r]^-{\svecv{k_g}{-a}} &B\oplus W_C \ar@{->>}[r]^-{\svech{g}{w_C}} &C.}$$
By \cite[Lemma 3.1]{LN}, we have $K_g\in \B^-$; by \cite[Lemma 2.10]{LN}, we have $K_g\in \B^+$. Hence $K_g\in \h$ and we get a short exact sequence $0\rightarrow K_g\xrightarrow{\overline{k_g}} B\xrightarrow{\overline g} C\rightarrow 0$ in $\overline \h$. Hence $K_g\simeq A$ in $\overline \h$.
\end{proof}

In the rest of this article, we always assume $\B$ has enough projectives and enough injectives.

For a subcategory $\B'$, we define $\Omega^0\B'=\B'$ and $\Omega^i\B'$ for $i>0$ inductively by
 $\Omega^i\B'=\CoCone(\mathcal P, \Omega^{i-1}\B')$.
We call $\Omega^{i}\B'$ the {\it $i$-th syzygy} of $\B'$, by this definition we have $\mathcal P\subseteq \Omega^i \B'$, $i>0$. Dually we can define the {\it $i$-th cosyzygy} $\Sigma^i \B'$.


\begin{lem}\label{syzygyapproximation}
If we have a cotorsion pair $(\C,\C^{\bot_1})$, then any object $X$ admits a commutative diagram
$$\xymatrix{
Y_0 \ar@{=}[r] \ar@{ >->}[d]_{g_0} &Y_0 \ar@{ >->}[d]^{y_0}\\
U_0 \ar@{ >->}[r]^{u_0} \ar@{->>}[d]_{f_0} &P_0 \ar@{->>}[r] \ar@{ >->}[d] &C_0 \ar@{=}[d]\\
X \ar@{ >->}[r] &T_0 \ar@{->>}[r] &C_0
}
$$
where $T_0\in \C^{\bot_1}$, $C_0\in \C$, $P_0\in \mathcal P$ and $f_0$ is a right $\Omega \C$-approximation.
\end{lem}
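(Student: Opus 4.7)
The plan is to take an ``enough projectives'' conflation for $T_0$ and compose two deflations. Apply the cotorsion pair to $X$ to get the conflation $X \rightarrowtail T_0 \twoheadrightarrow C_0$ with $T_0 \in \C^{\bot_1}$, $C_0 \in \C$ (the bottom row). Since $\B$ has enough projectives, choose a conflation $Y_0 \rightarrowtail P_0 \twoheadrightarrow T_0$ with $P_0 \in \mathcal P$ (this supplies the middle column). The composite $P_0 \twoheadrightarrow T_0 \twoheadrightarrow C_0$ is again a deflation in $\B$, so letting $U_0$ be its kernel yields the conflation $U_0 \rightarrowtail P_0 \twoheadrightarrow C_0$ (the middle row). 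An iterated application of the dual of Proposition~\ref{1.20}, comparing the kernels of the two deflations along the composition, produces the conflation $Y_0 \rightarrowtail U_0 \twoheadrightarrow X$ (the left column). The morphisms $u_0, g_0, f_0, y_0$ and the commutativity of the diagram emerge naturally from this construction.

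For the approximation property, take $W \in \Omega \C$ with a morphism $g : W \to X$ and a realising conflation $W \rightarrowtail P \twoheadrightarrow C$ (with $P \in \mathcal P$, $C \in \C$). Applying $\Hom(-, T_0)$ to this conflation and using the cotorsion pair orthogonality $\EE(C, T_0) = 0$, the composite $\iota g : W \to T_0$ (where $\iota$ denotes the left inflation of the bottom row) extends to some $\phi : P \to T_0$. By projectivity of $P$ and the deflation $P_0 \twoheadrightarrow T_0$, the map $\phi$ lifts to $\psi : P \to P_0$. The restriction $\psi|_W : W \to P_0$ has zero post-composition with $P_0 \twoheadrightarrow C_0$ (by chasing the identity $(T_0 \twoheadrightarrow C_0)\circ \iota = 0$ through the diagram), so $\psi|_W$ factors through $u_0 : U_0 \rightarrowtail P_0$ as some $h : W \to U_0$. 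A final diagram chase using the commutativity of the middle column then gives $\iota(f_0 h) = \phi|_W = \iota g$, and since inflations are monomorphisms in extriangulated categories, $f_0 h = g$.

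The main delicacy lies in the approximation argument: one has to thread $g$ through the cotorsion-pair orthogonality (to extend $\iota g$ to a map out of $P$), then through projective lifting (to obtain $\psi$ into $P_0$), and finally back to $U_0$ via factorisation through a kernel. Each individual step is routine, but assembling them correctly with commuting diagram chases through $T_0$ and $P_0$ is where care is needed.
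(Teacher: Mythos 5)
Your construction of the diagram is essentially the paper's (both amount to invoking the extriangulated axioms, i.e.\ the dual of Proposition~\ref{1.20}/(ET4)$^{\mathrm{op}}$, on the conflations $X\rightarrowtail T_0\twoheadrightarrow C_0$ and $Y_0\rightarrowtail P_0\twoheadrightarrow T_0$), but the approximation argument has a genuine gap at its final step: inflations are \emph{not} monomorphisms in a general extriangulated category. In a triangulated category -- a special case of the setting, and precisely the case needed to recover Marsh--Palu -- every morphism is an inflation, since any $f\colon A\to B$ completes to a triangle; so ``inflation'' carries no cancellation property. Concretely, applying $\Hom_{\B}(W,-)$ to the $\EE$-triangle $X\overset{\iota}{\rightarrowtail}T_0\twoheadrightarrow C_0$ gives exactness at $\Hom_{\B}(W,T_0)$ but there is no term to the left of $\Hom_{\B}(W,X)$, so from $\iota(g-f_0h)=0$ you may only conclude that $g-f_0h$ lies in the kernel of $\iota_*$, which you cannot control; showing that such morphisms factor through $f_0$ is exactly what the lemma claims, so the argument becomes circular. (Your proof is fine in an exact category, where inflations are monic, but the lemma is used in the general extriangulated setting.)

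The missing ingredient is the extra conflation that the dual of Proposition~\ref{1.20} hands you and which you discard, namely $U_0\overset{\svecv{u_0}{f_0}}{\rightarrowtail}P_0\oplus X\overset{\svech{-p_0}{t}}{\twoheadrightarrow}T_0$ (here $t=\iota$). Working with it controls \emph{both} components at once instead of only the composite into $T_0$: with your $W\overset{u}{\rightarrowtail}P\twoheadrightarrow C$ and $g\colon W\to X$, extend $tg$ over $P$ using $\EE(C,T_0)=0$ as you did, but then lift the resulting map $P\to T_0$ through the deflation $\svech{-p_0}{t}$ (projectivity of $P$) to get $\svecv{-a}{b}\colon P\to P_0\oplus X$. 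Now $\svech{-p_0}{t}\bigl(\svecv{0}{g}-\svecv{-au}{bu}\bigr)=0$, so by the weak-kernel property of the conflation above this difference factors through $\svecv{u_0}{f_0}$ via some $d\colon W\to U_0$; reading off the $X$-component gives $g=f_0d+bu$ on the nose, with no loss through $\iota$. Finally the correction term is absorbed by lifting $b\colon P\to X$ through the deflation $f_0$ (projectivity of $P$ again), say $b=f_0c$, whence $g=f_0(d+cu)$ and $f_0$ is a right $\Omega\C$-approximation. If you replace your last ``$\iota$ is monic'' step by this two-component factorization, your proof is correct and coincides with the paper's.
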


\begin{proof}
If we have a cotorsion pair $(\C,\C^{\bot_1})$, then any object $X$ admits a conflation $\xymatrix{X \ar@{ >->}[r] &T_0 \ar@{->>}[r] &C_0}$ where $T_0\in \C^{\bot_1}$, $C_0\in \C$. Since $\B$ has enough projectives, object $T_0$ admits a conflation $\xymatrix{Y_0 \ar@{ >->}[r]^{y_0} &P_0 \ar@{->>}[r] &T_0}$ where $P_0\in \mathcal P$. Hence by the axiom of extriangulated category, we get a commutative diagram
$$\xymatrix{
Y_0 \ar@{=}[r] \ar@{ >->}[d]_{g_0} &Y_0 \ar@{ >->}[d]^{y_0}\\
U_0 \ar@{ >->}[r]^{u_0} \ar@{->>}[d]_{f_0} &P_0 \ar@{->>}[r] \ar@{ >->}[d]^{p_0} &C_0 \ar@{=}[d]\\
X \ar@{ >->}[r]_t &T_0 \ar@{->>}[r] &C_0
}
$$
which gives rise to a conflation $\xymatrix{U_0\ar@{ >->}[r]^-{\svecv{u_0}{f_0}} &P_0\oplus X \ar@{->>}[r]^-{\svech{-p_0}{t}} &T_0}$. Let $f:U\to X$ be any morphism such that $U$ admits a conflation $\xymatrix{U \ar@{ >->}[r]^u &P \ar@{->>}[r] &C}$ where $P\in\mathcal P$ and $C\in\C$. then we get the following commutative diagram
$$\xymatrix{
&U \ar@{ >->}[r]^u \ar[d]_{\svecv{0}{f}} &P \ar@{->>}[r] \ar@{.>}[d]^p &C\\
U_0\ar@{ >->}[r]_-{\svecv{u_0}{f_0}} &P_0\oplus X \ar@{->>}[r]_-{\svech{-p_0}{t}} &T_0
}
$$
since $\EE(C,T_0)=0$. Object $P$ is projective, hence there is a morphism $P\xrightarrow{\svecv{-a}{b}} P_0\oplus X$ such that $\svech{-p_0}{t}\svecv{-a}{b}=p$ and a morphism $c:P\to U_0$ such that $f_0c=b$. Since $\svech{-p_0}{t}(\svecv{0}{f}-\svecv{-au}{bu})=0$, there exists a morphism $d:U\to U_0$ such that $f=f_0d+bu=f_0(d+cu)$. Hence $f_0$ is a right $\Omega C$-approximation.
\end{proof}

\begin{rem}\label{factorsthroughP}
In the lemma above, if $\EE(T_0,B)=0$ for an object $B$, then $\Hom_{\B}(g_0,B)$ is surjective.
\end{rem}

\begin{lem}\label{syzygyepi}
If we have a cotorsion pair $(\C,\C^{\bot_1})$ where $\C$ is rigid, let $g:A\twoheadrightarrow B$ be a deflation in $\h$ such that $\overline g$ is an epimorphsim where $A,B\in \h$, then $\Hom_B(X,g)$ is surjective whenever $X\in \Omega \C$.
\end{lem}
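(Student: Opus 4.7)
The plan is to lift $\alpha\colon X\to B$ through $g$ in two moves: first, replace $g$ by a ``thickened'' deflation whose kernel lies in $\h$ (produced from Lemma~\ref{exactsq} and the epi hypothesis on $\overline g$) and lift $\alpha$ through the enlarged deflation; second, absorb the extra component using the projectivity of the syzygy data.

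Fix a conflation $X\rightarrowtail P\twoheadrightarrow C_X$ witnessing $X\in\Omega\C$, with $P\in\mathcal P$, $C_X\in\C$, and let $K\rightarrowtail A\xrightarrow{g}B\dashrightarrow$ be the conflation of $g$ with class $\delta\in\EE(B,K)$; the claim is equivalent to $\alpha^*\delta=0$ in $\EE(X,K)$. Let $V_B\rightarrowtail W_B\twoheadrightarrow B$ be the cotorsion conflation of $B$, with $V_B\in\C^{\bot_1}$ and $W_B\in\W=\C\cap\C^{\bot_1}$. Since $\overline g$ is an epimorphism in $\overline\h$, I apply Lemma~\ref{exactsq} to the short exact sequence $0\to\Ker\overline g\to\overline A\to\overline B\to 0$ and obtain a conflation $K_g\rightarrowtail A\oplus W_B\xrightarrow{\pi}B$, where $K_g=A\times_B W_B\in\h$ and $\pi=(g,-w_B)$; the pullback also provides $K\rightarrowtail K_g\twoheadrightarrow W_B$.

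Granted a lift $(\beta_1,\beta_2)\colon X\to A\oplus W_B$ of $\alpha$ through $\pi$, the remaining step is straightforward: because $C_X\in\C$ and $W_B\in\C^{\bot_1}$ give $\EE(C_X,W_B)=0$, the six-term sequence for $X\rightarrowtail P\twoheadrightarrow C_X$ with target $W_B$ shows $\Hom(P,W_B)\twoheadrightarrow\Hom(X,W_B)$, so $\beta_2$ extends to $\tilde\beta_2\colon P\to W_B$. Projectivity of $P$ lifts $w_B\tilde\beta_2\colon P\to B$ through the deflation $g$ to some $\gamma\colon P\to A$, and then $\beta:=\beta_1-\gamma|_X\colon X\to A$ satisfies $g\beta=g\beta_1-w_B\beta_2=\alpha$.

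What remains, and what I expect to be the hard step, is the lifting of $\alpha$ through $\pi$, namely the vanishing of $\alpha^*\delta_\pi$ in $\EE(X,K_g)$. My plan is to exploit $K_g\in\h\subseteq\B^-$, giving a conflation $K_g\rightarrowtail W^{K_g}\twoheadrightarrow C^{K_g}$ with $W^{K_g}\in\W$, $C^{K_g}\in\C$. Rigidity of $\C$ forces $\EE(C_X,C^{K_g})=0$, so any morphism $X\to C^{K_g}$ extends along $X\rightarrowtail P$, and projectivity of $P$ lifts the extension through $W^{K_g}\twoheadrightarrow C^{K_g}$; restricting to $X$ makes $\Hom(X,W^{K_g})\to\Hom(X,C^{K_g})$ surjective, so the six-term sequence kills the connecting map $\Hom(X,C^{K_g})\to\EE(X,K_g)$ and embeds $\EE(X,K_g)$ into $\EE(X,W^{K_g})$. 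Tracking $\alpha^*\delta_\pi$ through the inflation $K_g\rightarrowtail W^{K_g}$, and using the pullback description $K_g=A\times_B W_B$ to rewrite its image in $\EE(X,W^{K_g})$ as a class pulled back from $W_B$-data, is intended to make it vanish by a final six-term argument analogous to the one above. The main obstacle is precisely this last cohomological book-keeping; once it is done, the projective-absorption argument then closes the proof, and the epi hypothesis on $\overline g$ enters essentially through Lemma~\ref{exactsq} placing $K_g$ in $\h$.
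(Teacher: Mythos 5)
Your reduction is fine as far as it goes: granting a lift of $\alpha$ through $\pi=(g,-w_B)$, the absorption of the $W_B$-component via $\EE(C_X,W_B)=0$ and the projectivity of $P$ is correct, and in fact coincides with the second half of the paper's own argument. But the step you defer as ``cohomological book-keeping'' --- the vanishing of $\alpha^{*}\delta_{\pi}$ in $\EE(X,K_g)$, i.e.\ the existence of a lift of an \emph{arbitrary} $\alpha\colon X\to B$ through $\pi$ --- is the entire content of the lemma, and the route you sketch does not close it. After the (correct) embedding $\EE(X,K_g)\hookrightarrow\EE(X,W^{K_g})$ you would need the image of $\alpha^{*}\delta_{\pi}$ to die in $\EE(X,W^{K_g})$, where $X\in\Omega\C$ and $W^{K_g}\in\W$; rigidity of $\C$ gives no control over groups of this ``$\EE^{2}(\C,\W)$'' type (for instance, for a cluster-tilting subcategory $\C$ of a $2$-Calabi--Yau triangulated category one has $\EE(\Omega C,W)\cong\Hom_{\B}(C,\Sigma^{2}W)$, which is dual to $\Hom_{\B}(W,C)$ and nonzero in general), so there is no ``six-term argument analogous to the one above'' waiting to be carried out.

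More decisively, your plan uses the hypothesis that $\overline g$ is an epimorphism only to place $K_g$ in $\h$, and that cannot suffice. Take $\B$ triangulated with $\C$ cluster-tilting: then $\h=\B$, so every ingredient your argument actually invokes ($A,B,K_g\in\h$, the $\W$- and $\C$-conflations, rigidity) is available for \emph{every} deflation between objects of $\h$; yet for $g=w_B\colon W_B\twoheadrightarrow B$ with $B\notin\C$ one has $\overline g=0$, and a lift of every $\alpha\colon X\to B$ ($X\in\Omega\C$) through $\pi$ would force every such $\alpha$ to factor through $\W\subseteq\C^{\bot_1}$, hence (pushing the syzygy conflation of $X$ out along $\alpha$) $\EE(\C,B)=0$, a contradiction. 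So the epimorphism hypothesis must be used at the lifting step itself, not merely to locate $K_g$. That is exactly what the paper does: epimorphy of $\overline g$ yields, via \cite[Corollary 2.26]{LN}, a conflation $A\rightarrowtail C^{0}\oplus B\twoheadrightarrow C_g$ with $C^{0},C_g\in\C$, so the obstruction to extending lives in $\EE(C_X,C_g)$, which vanishes by rigidity of $\C$, and projectivity of $P$ then finishes just as in your absorption step. In other words, the obstruction has to be pushed to the cone side, into an $\EE(\C,\C)$-group, rather than resolved on the kernel side into $\EE(\Omega\C,\W)$; as it stands your proposal is missing this essential use of the hypothesis.
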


\begin{proof}
If $g:A\twoheadrightarrow B$ be a deflation in $\h$ such that $\overline g$ is an epimorphsim, then $g$ admits the following commutative diagram:
$$\xymatrix{
A \ar@{ >->}[r]^{a_0} \ar[d]_g &C^0 \ar[d]^{c_0} \ar@{->>}[r] &C^1 \ar@{=}[d]\\
B \ar@{ >->}[r]_h &C_g \ar@{->>}[r] &C^1  
}
$$
where $C^0,C^1,C_g\in \C$. By Proposition \ref{1.20}, we have a conflation $\xymatrix{A \ar@{ >->}[r]^-{\svecv{a_0}{g}} &C_0\oplus B \ar@{->>}[r]^-{\svech{-c_0}{h}} &C_g}$. Let $f:X\to B$ be any morphism such that $X$ admits a conflation $\xymatrix{X \ar@{ >->}[r]^u &P \ar@{->>}[r] &C}$ where $P\in\mathcal P$ and $C\in\C$, then we get the following commutative diagram
$$\xymatrix{
&X \ar@{ >->}[r]^u \ar[d]_{\svecv{0}{f}} &P \ar@{->>}[r] \ar@{.>}[d]^p &C\\
A\ar@{ >->}[r]_-{\svecv{a_0}{g}} &C_0\oplus B \ar@{->>}[r]_-{\svech{-c_0}{h}} &C_g
}
$$
since $\C$ is rigid. Object $P$ is projective, hence there is a morphism $P\xrightarrow{\svecv{-a}{b}} C_0\oplus B$ such that $\svech{-c_0}{h}\svecv{-a}{b}=p$ and a morphism $c:P\to A$ such that $gc=b$. Since $\svech{-c_0}{h}(\svecv{0}{f}-\svecv{-au}{bu})=0$, there exists a morphism $d:X\to A$ such that $f=gd+bu=g(d+cu)$. Hence $\Hom_{\B}(X,g)$ is surjective.

\end{proof}

\section{Localization of hearts}

\begin{defn}
Subcategory $\C$ satisfies condition (RCP) if $\mathcal P\subset \C$, $\C$ is rigid, contravariantly finite, closed under direct summands.

\end{defn}

("RCP" means rigid cotorsoin pair)

By Lemma \ref{rigidcp}, if  $\C$ satisfies condition (RCP) , $(\C,\C^{\bot_1})$ is a cotorsion pair. 

From now on, let $\D\subset\C$ be subcategories satisfying (RCP). Let $\U=\Omega\C$ and $\V=\Omega {\D}$.


Since $(\C,\C^{\bot_1})$ is a cotorsion pair, we have a subcategory $\h$ according to the definition of the heart such that the heart of $(\C,\C^{\bot_1})$ is $\h/\C=:\overline \h$. In this case $\h=\CoCone(\C,\C)$. 

Let $\h_\D=\CoCone(\D,\C)$, then $\U\subseteq \h_\D\subseteq \h$.

\begin{prop}\label{lem2.2}
Any object $X$ admits a conflation $\xymatrix{Z\ar@{ >->}[r] &Y\ar@{->>}[r]^{f} &X}$ where $f$ is a right ${\h_\D}$-approximation and $Z\in {\D}^{\bot_1}$. Moreover, morphism  $x':X\to X'$ factors through $\C^{\bot_1}$ if $x'f$ factors through $\C^{\bot_1}$.
\end{prop}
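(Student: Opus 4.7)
The plan is to construct $Y$ and $Z$ by chaining the two cotorsion-pair decompositions associated with $\C$ and $\D$. First I would apply $(\C,\C^{\bot_1})$ to $X$, obtaining a conflation $V_X\rightarrowtail C_X\twoheadrightarrow X$ with $V_X\in\C^{\bot_1}$ and $C_X\in\C$, and then apply $(\D,\D^{\bot_1})$ to $C_X$ to get $V_{C_X}\rightarrowtail D_{C_X}\twoheadrightarrow C_X$ with $V_{C_X}\in\D^{\bot_1}$ and $D_{C_X}\in\D$. Composing the two deflations $D_{C_X}\twoheadrightarrow C_X\twoheadrightarrow X$ and applying (the dual of) Proposition~\ref{1.20} produces the desired conflation $Z\rightarrowtail D_{C_X}\twoheadrightarrow X$, with $Z$ itself sitting in a conflation $V_{C_X}\rightarrowtail Z\twoheadrightarrow V_X$. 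Since $\D\subseteq\C$ forces $\C^{\bot_1}\subseteq\D^{\bot_1}$, both $V_X$ and $V_{C_X}$ lie in $\D^{\bot_1}$, so $Z\in\D^{\bot_1}$ by the extension-closedness of $\D^{\bot_1}$ (Remark~\ref{C3}(c)). Take $Y:=D_{C_X}$, which belongs to $\D\subseteq\h_\D$ via the trivial conflation $D\rightarrowtail D\twoheadrightarrow 0$, and let $f$ denote the deflation to $X$.

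For the right $\h_\D$-approximation property, my plan is a pushout-and-split argument. Given $Y'\in\h_\D$ with a defining conflation $Y'\rightarrowtail D'\twoheadrightarrow C'$ ($D'\in\D$, $C'\in\C$) and $g\colon Y'\to X$, I would form the pushout of $Y'\rightarrowtail D'$ along $g$, yielding a conflation $X\rightarrowtail E\twoheadrightarrow C'$ and a morphism $g'\colon D'\to E$. Combining with $V_X\rightarrowtail C_X\twoheadrightarrow X$ via the $(3\times3)$-lemma produces an object $F$ with conflations $V_X\rightarrowtail F\twoheadrightarrow E$ and $C_X\rightarrowtail F\twoheadrightarrow C'$; rigidity of $\C$ gives $\EE(C',C_X)=0$, so the second conflation splits as $F\cong C_X\oplus C'$. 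The vanishing $\EE(D',V_X)=0$ (from $D'\in\C$ and $V_X\in\C^{\bot_1}$) lifts $g'$ to $\tilde g=(\alpha,\beta)\colon D'\to F$, and a diagram chase pinning down $\beta$ as the deflation $D'\twoheadrightarrow C'$ yields $q\alpha\circ(Y'\rightarrowtail D')=g$, where $q\colon C_X\twoheadrightarrow X$. Finally, I would lift $\alpha$ through $p\colon D_{C_X}\twoheadrightarrow C_X$ using $\EE(D',V_{C_X})=0$ (from $D'\in\D$, $V_{C_X}\in\D^{\bot_1}$) and precompose with $Y'\rightarrowtail D'$ to obtain the sought lift of $g$ through $f=qp$.

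For the ``moreover'' clause, suppose $x'f=ba$ with $a\colon Y\to T$, $b\colon T\to X'$, and $T\in\C^{\bot_1}$. The plan is to exploit the factorization $f=qp$ together with the vanishing $\EE(C_X,T)=0$ (since $C_X\in\C$, $T\in\C^{\bot_1}$) to descend the factorization in two stages: first to a factorization of $x'q\colon C_X\to X'$ through $\C^{\bot_1}$ by modifying $a$ via a pushout against the conflation $V_{C_X}\rightarrowtail D_{C_X}\twoheadrightarrow C_X$, and then to a factorization of $x'$ through $\C^{\bot_1}$ by descending through $q$, using the analogous vanishing coming from $V_X\in\C^{\bot_1}$. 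The key identity $b\circ a|_{V_{C_X}}=0$, forced by $f\circ i=0$ where $i\colon Z\rightarrowtail Y$, is expected to supply the compatibility needed at each pushout. The main obstacle lies precisely here: at each descent step one must build a new object in $\C^{\bot_1}$ as a pushout and verify that it stays in $\C^{\bot_1}$, which hinges on a careful interplay between the rigidity of $\C$, the decomposition $V_{C_X}\rightarrowtail Z\twoheadrightarrow V_X$ with $V_X\in\C^{\bot_1}$, and the position of $T$ in $\C^{\bot_1}$, so as to kill the possibly nontrivial obstructions in $\EE(\C,-)$ produced by the pushouts.
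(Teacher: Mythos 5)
Your construction of the conflation is where the argument breaks down, not just the technical steps afterwards. You take $Y:=D_{C_X}\in\D$, but a deflation $f\colon Y\twoheadrightarrow X$ with $Y\in\D$ is essentially never a right $\h_\D$-approximation: pick any $X\in\h_\D\setminus\D$ (such objects exist in every nondegenerate instance, e.g.\ nonprojective objects of $\Omega\C$, or of $\C'=\h_\D\cap\D^{\bot_1}$ as in Example~\ref{ex1}). Then $\id_X$ is a morphism from an object of $\h_\D$ to $X$, so it would have to factor through $f$; this makes $f$ a split deflation, hence $X$ a direct summand of $D_{C_X}\in\D$, hence $X\in\D$ since $\D$ is closed under direct summands --- a contradiction. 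The paper avoids this by building $Y$ not inside $\D$ but as an extension glued from a right $\Omega\C$-approximation of $X$ (Lemma~\ref{syzygyapproximation}) and a right $\Omega\D$-approximation of the resulting syzygy $Y_0$; it is precisely the $\Omega\C$-approximation property (together with the fact that every map from $\V=\Omega\D$ to $Z$ factors through a projective, via Remark~\ref{factorsthroughP}) that forces arbitrary morphisms from objects of $\CoCone(\D,\C)$ to factor through $f$, and $Y$ lands in $\h_\D$ only via the conflation $Y\rightarrowtail D_1\oplus P_0\twoheadrightarrow C_0$, not by lying in $\D$.

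Consistently with the conclusion being false for your $Y$, there is a concrete gap in your factorization argument: the object $F$ you want from the ``$(3\times3)$-lemma'' applied to the end-to-end spliced conflations $V_X\rightarrowtail C_X\twoheadrightarrow X$ and $X\rightarrowtail E\twoheadrightarrow C'$ need not exist. Such an $F$ (with conflations $V_X\rightarrowtail F\twoheadrightarrow E$ and $C_X\rightarrowtail F\twoheadrightarrow C'$) exists exactly when the class of $V_X\rightarrowtail C_X\twoheadrightarrow X$ lifts along $\EE(E,V_X)\to\EE(X,V_X)$, and the obstruction is the Yoneda-type product of the two extension classes, an $\EE^2(C',V_X)$-type obstruction in the exact case; rigidity of $\C$ only gives $\EE(C',V_X)=0$, which says nothing about it, and the $3\times3$-type results available in the extriangulated setting start from composable inflations/deflations or a commutative square, not from two conflations spliced at $X$. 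Finally, your ``moreover'' part is left as an acknowledged obstacle, whereas in the paper it falls out of the construction for free: one has an explicit morphism of conflations from $Z\rightarrowtail Y\twoheadrightarrow X$ to $Z\rightarrowtail D_1\oplus P_0\twoheadrightarrow T_0$ with $T_0\in\C^{\bot_1}$, and $\EE(C_0,K)=0$ then lets any factorization of $x'f$ through $\C^{\bot_1}$ descend to a factorization of $x'$ through $T_0$.
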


\begin{proof}
By Lemma \ref{syzygyapproximation}, any object $X$ admits the following commutative diagram
$$\xymatrix{
Y_0 \ar@{=}[r] \ar@{ >->}[d]_{g_0} &Y_0 \ar@{ >->}[d]^{y_0}\\
U_0 \ar@{ >->}[r]^{u_0} \ar@{->>}[d]_{f_0} &P_0 \ar@{->>}[r] \ar@{ >->}[d] &C_0 \ar@{=}[d]\\
X \ar@{ >->}[r] &T_0 \ar@{->>}[r] &C_0
}
$$
where $T_0\in \C^{\bot_1}$, $C_0\in \C$, $P_0\in \mathcal P$ and $f_0$ is a right $\U$-approximation. Object $Y_0$ also admits a conflation $\xymatrix{Y_1 \ar@{ >->}[r]^{g_1} &V_1 \ar@{->>}[r]^{f_1} &Y_0}$ where $f_1$ is a right $\V$-approximation. Object $V_1$ admits a conflation $\xymatrix{V_1 \ar@{ >->}[r]^{h_1} &P_1 \ar@{->>}[r] &D_1}$ where $P_1\in \mathcal P_1$ and $D_1\in \D$. Thus we have the following commutative diagram
$$\xymatrix{
Y_1 \ar@{ >->}[r]^{g_1} \ar@{=}[d] &V_1 \ar@{->>}[r]^{f_1} \ar@{ >->}[d]^{h_1} &Y_0 \ar@{ >->}[r]^{g_0} \ar@{ >->}[d]^v &U_0 \ar@{->>}[r]^{f_0} \ar@{ >->}[d]^u &X \ar@{=}[d]\\
Y_1 \ar@{ >->}[r] &P_1 \ar@{->>}[r]^p \ar@{->>}[d] &Z \ar@{->>}[d]^z \ar@{ >->}[r]^g &Y \ar@{->>}[r]^f \ar@{->>}[d]^y &X\\
& D_1 \ar@{=}[r] & D_1 \ar@{=}[r] &D_1
}
$$
where $\Hom_{\B}(U,f)$ is surjective for any $U\in \U$ since $f_0$ is a right $\U$-approximation.
We claim $\xymatrix{Z  \ar@{ >->}[r]^g &Y \ar@{->>}[r]^f &X}$ is the conflation we need.\\
Since we have the folloiwng commutative diagram of conflations:
$$\xymatrix{
U_0 \ar@{ >->}[r]^u \ar@{=}[d] &Y \ar@{->>}[r]^y \ar@{.>}[d]^{p_0} &D_1 \ar@{.>}[d]\\
U_0 \ar@{ >->}[r]_{u_0} &P_0 \ar[r] &C_0
}
$$
where $P_0\in \mathcal P$ and $C_0\in \C$, by Proposition \ref{1.20}, we get a conflation
$\xymatrix{Y \ar@{ >->}[r]^-{\svecv{y}{p_0}} &D_1\oplus P_0 \ar@{->>}[r] &C_0}$ which implies $Y\in \h_{\D}$.\\
For any subcategory $\B_1$, let $\underline \B_1^{\bot}=\{B\in\B \text{ }| \text{ } \Hom_{\B/{\mathcal P}}(\B_1,B)=0 \}$. Now we check that $Z\in \D^{\bot_1}$. It is enough to show $Z\in \underline \V^{\bot}$ since $\D^{\bot_1}=\underline \V^{\bot}$. Let $V$ be an object in $\V$ that admits a conflation $\xymatrix{V \ar@{ >->}[r]^{b} &P \ar@{->>}[r] &D}$ where $P\in \mathcal P$ and $D\in \D$. Let $a:V\to Z$ be any morphism, since $\D$ is rigid, morphism $za$ factors through $b$. We have the following diagram
$$\xymatrix{
&V \ar@{ >->}[r]^b \ar[d]_a \ar@{.>}[dl]_e &P \ar[d]^c \ar@{.>}[dl]^d \ar@{->>}[r] &D\\
Y_0 \ar@{ >->}[r]_v &Z \ar@{->>}[r]_z &D_1
}
$$
where $za=cb$. Since $P$ is projective, there exists a morphism $d:P\to Z$ such that $c=zd$. Hence $z(a-db)=0$ and there exists a morphism $e:V\to Y_0$ such that $ve=a-db$. Since $f_1$ is a right $\V$-approximation, there exists a morphism $h:V\to V_1$ such that $f_1h=e$. We get $vf_1h=ph_1h=a-db$, then $a$ factors through $\mathcal P$, which implies $Z\in \underline \V^{\bot}$. 

Let $Y'\in \h_\D$, then it admits the following commutative diagram
$$\xymatrix{
V_{Y'} \ar@{=}[r] \ar@{ >->}[d]_{v'} &V_{Y'} \ar@{ >->}[d]\\
U_{Y'} \ar@{ >->}[r] \ar@{->>}[d] &P_{Y'} \ar@{->>}[r] \ar@{ >->}[d] &C_{Y'} \ar@{=}[d]\\
Y' \ar@{ >->}[r] &D_{Y'} \ar@{->>}[r] &C_{Y'}
}
$$
where $D_{Y'}\in \D$, $C_{Y'}\in \C$ and $P_{Y'}\in \mathcal P$. Let $x:Y'\to X$ be a morphism, then we have a commutative diagram of conflations
$$\xymatrix{
V_{Y'} \ar@{ >->}[r]^{v'} \ar@{.>}[d]_{z'} &U_{Y'} \ar@{.>}[d] \ar@{->>}[r] &Y' \ar[d]^{x} \\
Z  \ar@{ >->}[r]^g &Y \ar@{->>}[r]^f &X.
}
$$
Since $Z\in \D^{\bot_1}=\underline \V^{\bot}$, morphism $z'$ factor through $\mathcal P$. By Remark \ref{factorsthroughP}, morphism $z'$ factors through $v'$, which implies $x$ factors through $f$. Hence $f$ is a right $\h_{\D}$-approximation.

Now we show the "moreover" part. Since we have the following commutative diagram of conflations:
$$\xymatrix{
Y_0 \ar@{ >->}[r]^v \ar@{=}[d] &Z \ar@{->>}[r]^z \ar[d]^{p_0g} &D_1 \ar@{.>}[d]\\
Y_0 \ar@{ >->}[r]_{y_0} &P_0 \ar@{->>}[r] &T_0
}
$$
we get a commutative diagram of conflations:
$$\xymatrix{
Z  \ar@{ >->}[r]^g \ar@{=}[d] &Y \ar@{ >->}[d]^-{\svecv{y}{p_0}} \ar@{->>}[r]^f &X \ar@{.>}[d]^t\\
Z \ar@{ >->}[r]_-{\svecv{z}{p_0g}} &D_1\oplus P_0 \ar@{->>}[r] &T_0
}$$
For convenience, we denote the above diagram by 
$$\xymatrix{
Z  \ar@{ >->}[r]^g \ar@{=}[d] &Y \ar@{ >->}[d]^-{y'} \ar@{->>}[r]^f &X \ar[d]^t\\
Z \ar@{ >->}[r]_-{z'} &D_1' \ar@{->>}[r] &T_0.
}$$
Now assume there is a morphism $x':X\to X'$ such that $x'f$ admits a commutative diagram
$$\xymatrix{
Y \ar[r]^f \ar[d]_l &X \ar[d]^{x'}\\
K \ar[r]_k &X'
}
$$
where $K\in \C^{\bot_1}$, then there exists a morpshism $n:D_1'\to K$ such that $l=ny'$  since $\EE(C_0,K)=0$. Hence there is a morphism $t':T_0\to X'$ such that $x'=t't$, which implies $x$ itself factors through $\C^{\bot_1}$.
\end{proof}

The following corollary is an immediate conclusion of Proposition \ref{lem2.2}.

\begin{cor}\label{heartepi}
In the propositon above, if $X\in\h$, then $\overline f$ is an epimorphism in $\overline \h$.
\end{cor}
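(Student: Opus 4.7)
The plan is to unpack the definition of $\overline f$ being epic in the heart $\overline\h = \h/\C$ and trace it back to the \emph{moreover} clause of Proposition \ref{lem2.2}. Because $\C$ is rigid we have $\C \subseteq \C^{\bot_1}$, hence $\U\cap \V = \C \cap \C^{\bot_1} = \C$, so a morphism in $\h$ vanishes in $\overline\h$ precisely when it factors through some object of $\C$. Thus it suffices to fix a morphism $x \colon X \to X'$ with $X' \in \h$ such that $\overline x \cdot \overline f = 0$ in $\overline\h$ and show $\overline x = 0$.

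From $\overline x \cdot \overline f = 0$ I obtain that $xf$ factors through an object of $\C$, and since $\C \subseteq \C^{\bot_1}$, in particular it factors through $\C^{\bot_1}$. Applying the \emph{moreover} part of Proposition \ref{lem2.2} directly yields a factorization $x = x_2 x_1$ with $x_1 \colon X \to T$ and $x_2 \colon T \to X'$ for some $T \in \C^{\bot_1}$.

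The remaining and only substantive step is to promote this factorization through $\C^{\bot_1}$ into a factorization through $\C$, using that $X$ itself belongs to $\h$. Since $X \in \h$, there is a conflation $X \overset{i}{\rightarrowtail} V^X \twoheadrightarrow U^X$ with $V^X, U^X \in \C \cap \C^{\bot_1} = \C$. Applying $\Hom_\B(-, T)$ and using $\EE(U^X, T) = 0$ (because $U^X \in \C$ and $T \in \C^{\bot_1}$), the map $x_1$ lifts to some $\tilde x_1 \colon V^X \to T$ with $\tilde x_1 \circ i = x_1$. Then $x = x_2 x_1 = (x_2 \tilde x_1) \circ i$ factors through $V^X \in \C$, so $\overline x = 0$, and $\overline f$ is an epimorphism in $\overline\h$. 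The main (rather minor) obstacle is precisely this bridging step between a factorization through $\C^{\bot_1}$ and one through $\C$; everything else is formal unwinding of the definitions.
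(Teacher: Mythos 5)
Your proof is correct and matches the argument the paper intends: the corollary is stated as an immediate consequence of Proposition \ref{lem2.2}, and your unpacking --- reduce $\overline{x}\,\overline{f}=0$ to $xf$ factoring through $\C\subseteq\C^{\bot_1}$, invoke the ``moreover'' clause to factor $x$ through some $T\in\C^{\bot_1}$, then lift along the conflation $X\rightarrowtail C^0\twoheadrightarrow C^1$ with $C^0,C^1\in\C$ using $\EE(C^1,T)=0$ to get a factorization through $\C$ --- is exactly the intended route. The bridging step you highlight is indeed the only nontrivial content, and it is handled correctly.
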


We also have the following useful corollary.

\begin{cor}\label{cap}
A morphism $g:A\to B$ in $\h_\D$ factors through $\D^{\bot_1}$ only if it factors through $\D^{\bot_1}\cap \h_\D$.
\end{cor}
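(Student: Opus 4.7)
The plan is to start from a factorization $g = qp$ with $p \colon A \to K$ and $q \colon K \to B$ for some $K \in \D^{\bot_1}$, and then replace $K$ by an object that additionally lies in $\h_\D$. The natural tool is Proposition \ref{lem2.2} applied to $K$: it produces a conflation $\xymatrix{Z \ar@{ >->}[r] & Y \ar@{->>}[r]^{f} & K}$ in which $Z \in \D^{\bot_1}$, $Y \in \h_\D$, and $f$ is a right $\h_\D$-approximation of $K$.

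The key observation is that $Y$ lies in $\D^{\bot_1}$ as well. Indeed, $\D^{\bot_1}$ is closed under extensions: since $(\D,\D^{\bot_1})$ is a cotorsion pair under our running hypotheses, Remark \ref{C3}(c) applies. Concretely, applying $\EE(\D,-)$ to the conflation $\xymatrix{Z \ar@{ >->}[r] & Y \ar@{->>}[r] & K}$ and using $\EE(\D,Z) = \EE(\D,K) = 0$ gives $\EE(\D,Y) = 0$, so $Y \in \D^{\bot_1}\cap \h_\D$.

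Now I would use that $A \in \h_\D$ and that $f$ is a right $\h_\D$-approximation of $K$ to factor $p$ through $f$: there exists $p' \colon A \to Y$ with $p = fp'$. Then $g = qp = (qf)p'$ factors through $Y \in \D^{\bot_1}\cap \h_\D$, which is exactly what we want. I do not expect any real obstacle; the whole content is already packaged in Proposition \ref{lem2.2}, and the only thing to verify by hand is the closure of $\D^{\bot_1}$ under the one extension that appears, which is immediate.
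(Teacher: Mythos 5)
Your proposal is correct and follows essentially the same route as the paper: apply Proposition \ref{lem2.2} to the intermediate object $K\in\D^{\bot_1}$, note that $Y\in\D^{\bot_1}$ by closure under extensions (since $Z,K\in\D^{\bot_1}$), and factor $A\to K$ through the right $\h_\D$-approximation using $A\in\h_\D$. The only difference is that you spell out the extension-closure step explicitly, which the paper leaves implicit.
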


\begin{proof}
If $f:A\to B$ in $\h_\D$ factors through an object $X\in \D^{\bot_1}$, by Proposition \ref{lem2.2}, there is a conflation $\xymatrix{Z\ar@{ >->}[r] &Y\ar@{->>}[r]^{f} &X}$ where $f$ is a right ${\h_\D}$-approximation and $Z\in \D^{\bot_1}$. Hence we have the following commutative diagram
$$\xymatrix{
&&&A \ar[dl] \ar@{.>}[dll] \ar[dd]^g\\
Z\ar@{ >->}[r] &Y\ar@{->>}[r]_{f} &X \ar[dr]\\
&&&B
}
$$
where $Y\in \D^{\bot_1}$ since $Z\in \D^{\bot_1}$.
\end{proof}





\begin{defn}
Let $H:\B\to \overline \h$ be the cohomological functor as in Theorem \ref{cohomological}. Denote $H({\D}^{\bot_1})$ by $\mathcal A$. Let $\s_{\mathcal A}$ be the class of epimorphisms $\overline f$ whose kernel belong to $\mathcal A$.
\end{defn}

\begin{lem}
We have $\mathcal A=(\h\cap {\D}^{\bot_1})/\C$. 
\end{lem}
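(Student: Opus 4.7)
The plan is to prove both inclusions $(\h\cap\D^{\bot_1})/\C\subseteq\mathcal A$ and $\mathcal A\subseteq(\h\cap\D^{\bot_1})/\C$ between these full subcategories of $\overline\h$.

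The first inclusion is tautological: for any $Y\in\h\cap\D^{\bot_1}$, the first bullet of Theorem \ref{cohomological} gives $H(Y)=\overline Y$, and $Y\in\D^{\bot_1}$ places $\overline Y$ in $H(\D^{\bot_1})=\mathcal A$.

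For the reverse inclusion, given $X\in\D^{\bot_1}$ I will produce $Y\in\h\cap\D^{\bot_1}$ with $H(X)\cong\overline Y$ in $\overline\h$. Apply Proposition \ref{lem2.2} to $X$ to obtain a conflation $Z\rightarrowtail Y\twoheadrightarrow X$ with $f\colon Y\to X$ a right $\h_\D$-approximation, $Y\in\h_\D\subseteq\h$, and $Z\in\D^{\bot_1}$. A closer inspection of the construction in the proof of Proposition \ref{lem2.2} shows that $Z$ itself fits into a conflation $P_1\rightarrowtail Z\twoheadrightarrow D_1$ with $P_1\in\mathcal P$ and $D_1\in\D$; since $\mathcal P,\D\subseteq\C$ and $\C$ is closed under extensions (Remark \ref{C3}(c)), $Z\in\C$. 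Extension-closure of $\D^{\bot_1}$ together with $X,Z\in\D^{\bot_1}$ then forces $Y\in\D^{\bot_1}$, so $Y\in\h\cap\D^{\bot_1}$ as desired.

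It remains to check $H(f)\colon H(Y)\to H(X)$ is an isomorphism in $\overline\h$. Rigidity of $\C$ gives $\C\subseteq\C^{\bot_1}$, and extension-closure of $\C^{\bot_1}$ yields $\add(\C\ast\C^{\bot_1})=\C^{\bot_1}$; by the second bullet of Theorem \ref{cohomological}, $\ker H$ consists exactly of objects in $\C^{\bot_1}$, and consequently a $\B$-morphism $g$ satisfies $H(g)=0$ iff $g$ factors through $\C^{\bot_1}$. Applying $H$ to $Z\rightarrowtail Y\twoheadrightarrow X$ and using the exactness in Theorem \ref{cohomological} gives $H(Z)\to H(Y)\to H(X)$ exact; since $Z\in\C\subseteq\C^{\bot_1}$ we have $H(Z)=0$ and $H(f)$ is monic. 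For epi-ness, the ``moreover'' clause of Proposition \ref{lem2.2} states that $x'\colon X\to X'$ factors through $\C^{\bot_1}$ iff $x'f$ does; under the translation above, this reads $H(x')=0\Leftrightarrow H(x')\circ H(f)=0$, so postcomposition with $H(f)$ is injective on $\Hom_{\overline\h}(H(X),H(X'))$ for every $X'$ — the universal property of an epimorphism. Hence $H(f)$ is simultaneously monic and epic in the abelian category $\overline\h$, so it is an isomorphism and $H(X)\cong\overline Y$.

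I expect the main obstacle to be the rigorous bridge ``$g$ factors through $\C^{\bot_1}$ iff $H(g)=0$'' on morphisms, plus the compatibility needed to lift $\overline\h$-morphisms out of $H(X)$ to $\B$-morphisms out of $X$ so that the moreover clause of Proposition \ref{lem2.2} applies uniformly; both rest on the equality $\add(\C\ast\C^{\bot_1})=\C^{\bot_1}$ and the standard fact that morphisms annihilated by a cohomological quotient factor through its kernel subcategory. The other pieces — the easy inclusion, the extension-closure arguments, and monomorphicity via $H(Z)=0$ — follow routinely from the machinery already in place.
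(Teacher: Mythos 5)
Your easy inclusion and the reduction ``$Y\in\D^{\bot_1}$ by extension-closure'' are fine, but the heart of your argument fails at the claim that the kernel object $Z$ of the approximation conflation lies in $\C$. A closer inspection of the proof of Proposition \ref{lem2.2} does \emph{not} give a conflation $P_1\rightarrowtail Z\twoheadrightarrow D_1$: what the construction gives is $Y_1\rightarrowtail P_1\twoheadrightarrow Z$ (so $Z$ is a cone over a projective, not an extension of $D_1$ by a projective) together with $Y_0\rightarrowtail Z\twoheadrightarrow D_1$, where $Y_0=\Omega T_0$ is a syzygy of an object of $\C^{\bot_1}$ and is in general not in $\C$. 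The proposition only guarantees $Z\in\D^{\bot_1}$, and that is strictly weaker: it yields $\Ker H(f)\in\mathcal A$ (via Lemma \ref{lem3.6}), not $\Ker H(f)=0$. Consequently $H(f)$ need not be monic in $\overline\h$, and indeed if $H(f)$ were always invertible in $\overline\h$ the localization at $\s_{\mathcal A}$ in Section 3 would be pointless --- the paper itself only concludes $Y\simeq X$ in $(\overline\h)_{\s_{\mathcal A}}$ (Proposition \ref{Prop3.7}), never $H(Y)\cong H(X)$ in $\overline\h$. (A quick sanity check: take $\D=\mathcal P$, so $\D^{\bot_1}=\B$ and $D_1$ is projective; then $Z\cong Y_0\oplus D_1$ with $Y_0=\Omega T_0\notin\C$ in general.) Your epimorphism step has a second, independent gap, which you yourself flag but do not close: epimorphy of $H(f)$ must be tested against \emph{all} morphisms $H(X)\to W$ in $\overline\h$, and these are represented by $\B$-morphisms out of the object $H(X)$ (which maps \emph{onto} $X$, not from $X$), so the ``moreover'' clause of Proposition \ref{lem2.2}, which concerns morphisms $X\to X'$ in $\B$, simply does not apply to them; note also that $H$ of a deflation is not epic in general (apply $H$ to a projective deflation $P\twoheadrightarrow C$ with $C\in\h$, $\overline C\neq 0$).

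The paper's proof avoids all of this by computing $H(B)$ explicitly for $B\in\D^{\bot_1}$: starting from the conflation $B\rightarrowtail V^B\twoheadrightarrow C^B$ with $V^B\in\C^{\bot_1}$, $C^B\in\C$, one forms the pullback diagram producing a conflation $V_0\rightarrowtail B^-\twoheadrightarrow B$ with $V_0\in\C^{\bot_1}$ and $B^-\in\h$, and by the construction of the cohomological functor in \cite{LN} one has $H(B)=B^-$ in $\overline\h$. Since $\C^{\bot_1}\subseteq\D^{\bot_1}$ and $\D^{\bot_1}$ is extension-closed, $B\in\D^{\bot_1}$ forces $B^-\in\h\cap\D^{\bot_1}$, giving $\mathcal A\subseteq(\h\cap\D^{\bot_1})/\C$ directly, with no need to invert any morphism in $\overline\h$. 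If you want to salvage your route, you would have to replace ``$H(f)$ is an isomorphism'' by an identification of $H(X)$ with a cokernel inside $\h\cap\D^{\bot_1}$, which essentially re-derives the paper's $B^-$ construction.
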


\begin{proof}
Since $H|_{\h}=\pi|_{\h}$, the image of $\h\cap {\D}^{\bot_1}$ lies in $\mathcal A$, hence $(\h\cap {\D}^{\bot_1})/\C\subseteq \mathcal A$.\\
Let $B\in \B$, it admits a commutative diagram
$$\xymatrix{
V_0 \ar@{ >->}[d] \ar@{=}[r] &{V_0} \ar@{ >->}[d]\\
B^- \ar@{->>}[d] \ar@{ >->}[r] &{C_0} \ar@{->>}[d] \ar@{->>}[r] &{C^B} \ar@{=}[d]\\
B \ar@{ >->}[r] &{V^B} \ar@{->>}[r] &{C^B}}
$$
where $V^B,V_0\in \C^{\bot_1}$, and $C_0$,$C^B\in \C$. Hence $B^-\in \B^-=\h$, by definition in \cite{LN} we get $H(B)=B^-$. If $B\in {\D}^{\bot_1}$, we get $B^-\in {\D}^{\bot_1}$. Hence $\mathcal A=H(\D^{\bot_1})\subseteq (\h\cap {\D}^{\bot_1})/\C$.
\end{proof}




Now let $\h_{\D}\cap \D^{\bot_1}=\C'$, we call $\C'$ the \emph{right} $\D$-\emph{mutation} of $\C$. Note that we do not require $\C'$ to be rigid in this section.


We have a functor $\eta: \h_\D/\D\hookrightarrow \h/\D\twoheadrightarrow \overline \h$, let $F$ be composition of functor $\eta$ and the localization functor $L_{\s_{\mathcal A}}:\overline \h\to (\overline \h)_{\s_{\mathcal A}}$. By definition we have $F(\C')=0$ in $(\overline \h)_{\s_{\mathcal A}}$. Hence we have the following commutative diagram
$$\xymatrix{
\h_\D/\D \ar[r]^-{\eta} \ar@{->>}[d]_{\pi'} &\overline \h \ar[d]^-{L_{\s_{\mathcal A}}} \\
\h_\D/\C' \ar[r]^-{F'} &(\overline \h)_{\s_{\mathcal A}}
}
$$
where $\pi'$ is the quotient functor. For convinience, we still denote the morphisms in $\h_\D/\D$ by $\overline f$ (where $f$ is the morphism in $\h_\D$) since $f$ factors through $\D$ if and only if it facttors through $\C$. We will show the following theorem, which is a generalization of the first part of \cite[Theorem 3.2]{MP}.

\begin{thm}\label{thm3.2}
The functor $F':\h_\D/\C'\to (\overline \h)_{\s_{\mathcal A}}$ is an equivalence.
\end{thm}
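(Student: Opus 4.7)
The plan is to verify that $F'$ is essentially surjective and fully faithful. Well-definedness of $F'$ is automatic: since $\C'=\h_\D\cap\D^{\bot_1}$, we have $H(\C')\subseteq\mathcal A$, and every object $M\in\mathcal A$ is zero in $(\overline\h)_{\s_\mathcal A}$ because the deflation $M\twoheadrightarrow 0$ has kernel $M\in\mathcal A$, hence lies in $\s_\mathcal A$.

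For essential surjectivity, fix $X\in\h$. Proposition \ref{lem2.2} yields a conflation $Z\rightarrowtail Y\twoheadrightarrow X$ with $Y\in\h_\D$ and $Z\in\D^{\bot_1}$. By Theorem \ref{cohomological} this becomes an exact sequence $H(Z)\to Y\to X$ in $\overline\h$, and Corollary \ref{heartepi} shows that $\overline f\colon Y\to X$ is an epimorphism. Its kernel in $\overline\h$ is a subquotient of $H(Z)\in\mathcal A$ and so lies in $\mathcal A$; hence $\overline f\in\s_\mathcal A$ and $F'(Y)\cong X$ in $(\overline\h)_{\s_\mathcal A}$.

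For faithfulness, suppose $f\colon A\to B$ in $\h_\D$ satisfies $F'(\overline f)=0$. Then $\overline f$ factors through an object of $\mathcal A=(\h\cap\D^{\bot_1})/\C$ in $\overline\h$; lifting this factorization we may write $f=ba+c$ in $\B$ with $a\colon A\to X$ and $b\colon X\to B$ for some $X\in\h\cap\D^{\bot_1}$, and $c\colon A\to B$ factoring through $\C$. Because $\C$ is rigid and $\D\subseteq\C$, we have $\C\subseteq\D^{\bot_1}$, so both summands land in $\D^{\bot_1}$ and $f$ factors through $\D^{\bot_1}$. Corollary \ref{cap} then upgrades this to a factorization through $\C'=\h_\D\cap\D^{\bot_1}$, giving $\overline f=0$ in $\h_\D/\C'$.

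For fullness, represent any $\phi\colon A\to B$ in $(\overline\h)_{\s_\mathcal A}$ by a roof $A\xleftarrow{\overline s}Z\xrightarrow{\overline g}B$ with $\overline s\in\s_\mathcal A$ and $Z\in\h$. Applying Proposition \ref{lem2.2} to $Z$ produces a deflation $p\colon W\twoheadrightarrow Z$ with $W\in\h_\D$ and kernel in $\D^{\bot_1}$; by the essential-surjectivity argument above $\overline p\in\s_\mathcal A$, so $\phi$ is equally represented by $A\xleftarrow{\overline{sp}}W\xrightarrow{\overline{gp}}B$ with $W\in\h_\D$. The remaining step is to show that $\overline{sp}\colon W\to A$ is already invertible in $\h_\D/\C'$, so that $\phi=F'(\overline{gp}\circ(\overline{sp})^{-1})$. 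For this one realizes the kernel of $\overline{sp}$ (an object of $\mathcal A$) by an honest conflation in $\h$ using Lemma \ref{exactsq}, applies Proposition \ref{lem2.2} once more to lift it back into $\h_\D$, and absorbs the discrepancy into $\h_\D\cap\D^{\bot_1}=\C'$ via Corollary \ref{cap}. This last invertibility step is the main technical obstacle, requiring the most delicate juggling of conflations, syzygies and $\h_\D$-approximations, whereas the other three parts follow routinely from the lemmas of Section 3.
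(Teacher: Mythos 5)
Your density argument matches the paper's (Proposition \ref{Prop3.7}), but the other two parts rest on steps that are not justified and are in fact the whole difficulty. For faithfulness you assert that $F'(\overline f)=0$ implies that $\overline f$ factors in $\overline\h$ through an object of $\mathcal A$. In a Gabriel--Zisman localization this is not automatic: a morphism killed by $L_{\s_{\mathcal A}}$ need not factor through objects killed by $L_{\s_{\mathcal A}}$ unless one has, say, a calculus of left fractions for $\s_{\mathcal A}$ (so that $L_{\s_{\mathcal A}}(\overline f)=0$ forces $\overline s\,\overline f=0$ for some $\overline s\in\s_{\mathcal A}$), and neither you nor the paper establishes such a calculus. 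As it stands this claim is essentially equivalent to the faithfulness you are trying to prove. The paper circumvents it (Proposition \ref{faithful}) by producing a functor \emph{out of} the localization: the cohomological functor $H_0$ of the second cotorsion pair $(\D,\D^{\bot_1})$ factors as $K_0\circ H$, one checks that $K_0$ sends every morphism of $\s_{\mathcal A}$ to an isomorphism, so the universal property gives $J:(\overline\h)_{\s_{\mathcal A}}\to\h_0/\D$ with $JL_{\s_{\mathcal A}}=K_0$; then $F(\overline u)=F(\overline v)$ yields $H_0(u)=H_0(v)$, hence $u-v$ factors through $\D^{\bot_1}$ by \cite[Proposition 2.22]{LN}, and Corollary \ref{cap} upgrades this to $\C'$. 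Some such detection functor (or a fraction calculus) is needed; without it your argument has a genuine gap.

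Fullness has the same problem: you reduce to showing that $\overline{sp}$, a morphism of $\s_{\mathcal A}$ between objects of $\h_\D$, becomes invertible already in $\h_\D/\C'$, and you yourself flag this as ``the main technical obstacle'' without proving it. Proving that invertibility directly is essentially as strong as the theorem (it follows \emph{from} $F'$ being an equivalence, not the other way around), so the sketch ``realize the kernel, lift, absorb into $\C'$'' does not close the argument. The paper's Proposition \ref{full2} avoids any inversion inside $\h_\D/\C'$: given a fraction $\overline f^{-1}\overline x$ with $\overline f\in\s_{\mathcal A}$ and endpoints in $\h_\D$, it realizes the kernel sequence by Lemma \ref{exactsq}, uses Lemma \ref{syzygyepi} to get surjectivity of $\Hom_\B(U,f')$ for $U\in\Omega\C$, and then, via the conflation $V_1\rightarrowtail U_0\twoheadrightarrow X_1$ with $U_0\in\Omega\C$, $V_1\in\Omega\D$ and Remark \ref{factorsthroughP}, lifts $\overline x$ through $\overline f$ to a genuine morphism $x'$ in $\h_\D$ with $\overline x=\overline f\,\overline{x'}$; hence the fraction equals $F(\overline{x'})$. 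You should either reproduce such a lifting argument (and, for both parts, justify the assumed roof form of morphisms in the localization) or establish a calculus of fractions for $\s_{\mathcal A}$; as written, faithfulness and fullness are not proved.
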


We shall prove it by several steps.

\begin{lem}\label{lem3.6}
Category $\mathcal A$ is closed under taking epimorphisms.
\end{lem}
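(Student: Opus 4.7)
The plan is to show that if $\overline g\colon A\to B$ is an epimorphism in $\overline\h$ with $A\in\mathcal A$, then a suitable representative of $B$ in $\h$ already lies in $\D^{\bot_1}$, from which $\overline B\in\mathcal A$ is immediate. The linchpin will be Lemma~\ref{syzygyepi}, which translates the hypothesis that $\overline g$ is an epimorphism into a concrete lifting property for morphisms out of syzygies of $\C$.

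First I would choose representatives in $\h$ with $A\in\h\cap\D^{\bot_1}$ and pick a morphism $g\colon A\to B$ in $\B$ lifting $\overline g$. To upgrade $g$ to an honest deflation, take a deflation $p\colon P\twoheadrightarrow B$ from a projective $P$ (available since $\B$ has enough projectives) and form $\svech{g}{p}\colon A\oplus P\twoheadrightarrow B$. Since $\mathcal P\subseteq\C\subseteq\D^{\bot_1}$ and $P$ lies in $\h=\CoCone(\C,\C)$ via the split conflation $P\rightarrowtail P\twoheadrightarrow 0$, the source $A\oplus P$ is still in $\h\cap\D^{\bot_1}$, and the induced map in $\overline\h$ remains an epimorphism because the summand $P$ is zero in $\overline\h$.

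Next I would fix $D\in\D$ and take a conflation $\Omega D\rightarrowtail P_D\twoheadrightarrow D$ with $P_D\in\mathcal P$. Applying $\Hom_\B(-,B)$, checking $\EE(D,B)=0$ amounts to proving that every $\phi\colon\Omega D\to B$ factors through the inflation $\iota\colon\Omega D\rightarrowtail P_D$. Since $\Omega D\in\Omega\D\subseteq\Omega\C$, Lemma~\ref{syzygyepi} supplies a lift $\psi\colon\Omega D\to A\oplus P$ with $\phi=\svech{g}{p}\psi$. Because $A\oplus P\in\D^{\bot_1}$, the $\Hom_\B(-,A\oplus P)$ sequence applied to the same syzygy conflation is exact on the right, so $\psi$ factors as $\psi=\psi'\iota$ with $\psi'\colon P_D\to A\oplus P$. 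Then $\phi=\svech{g}{p}\psi'\iota$ itself factors through $\iota$, so $\EE(D,B)=0$; hence $B\in\h\cap\D^{\bot_1}$ and $\overline B\in\mathcal A$.

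The main obstacle is really just identifying Lemma~\ref{syzygyepi} as the correct tool: it is precisely the statement that converts the epi condition on $\overline g$ into the lifting property needed along $\Omega\C$. Once that is in hand, the fact that $\Omega\D\subseteq\Omega\C$ together with the hypothesis $A\oplus P\in\D^{\bot_1}$ reduces the proof to a routine Ext-vanishing computation; no subtler issue arises, since $\D^{\bot_1}$ is stable under the direct summand and summing with projectives used along the way.
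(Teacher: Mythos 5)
Your argument is correct, but it takes a genuinely different route from the paper. The paper's proof is a three-line application of \cite[Corollary 2.26]{LN}: an epimorphism $\overline f\colon Y\to X$ in $\overline \h$ yields a conflation $Y\rightarrowtail X\oplus C^0\twoheadrightarrow C_f$ with $C^0,C_f\in\C$, and since $C_f\in\C\subseteq\D^{\bot_1}$ and $Y\in\D^{\bot_1}$, extension-closedness of $\D^{\bot_1}$ plus closure under direct summands gives $X\in\D^{\bot_1}$ at once. You instead stabilize the chosen lift $g$ to a deflation $\svech{g}{p}\colon A\oplus P\twoheadrightarrow B$ by adding a projective (correctly noting that $P\in\mathcal P\subseteq\C\subseteq\D^{\bot_1}$, that $A\oplus P\in\h$, and that the induced map stays epic in $\overline\h$), invoke Lemma \ref{syzygyepi} to lift any $\phi\colon\Omega D\to B$ through $\svech{g}{p}$, and then kill $\EE(D,B)$ by pushing the lift across $\Omega D\rightarrowtail P_D$ using $\EE(D,A\oplus P)=0$; each of these steps is legitimate (in particular the long exact sequence argument characterizing $\EE(D,B)=0$ by factorization through $\iota$ is valid in extriangulated categories, and $\Omega\D\subseteq\Omega\C$ so Lemma \ref{syzygyepi} applies). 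What the paper's route buys is brevity and a direct use of the structural description of epimorphisms in the heart; what your route buys is that it leans only on the paper's own Lemma \ref{syzygyepi} rather than citing \cite[Corollary 2.26]{LN} directly — though note that the proof of Lemma \ref{syzygyepi} itself rests on exactly the same kind of diagram, so your argument is essentially a repackaging of the same mechanism through an extra layer, at the cost of the auxiliary deflation step.
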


\begin{proof}
Let $\overline f:Y\to X$ be an epimorphism in $\overline \h$ such that $Y\in {\D}^{\bot_1}$, we show that $X\in {\D}^{\bot_1}$. By \cite[Corollary 2.26]{LN}, we have the following commutative diagram
$$\xymatrix{
Y \ar[r] \ar[d]_{f} &{C^0} \ar[r] \ar[d] &{C^1} \ar@{=}[d]  \\
X \ar[r] &{C_f} \ar[r] &{C^1}
}
$$
where $C^0,C^1,C_f\in \C$. Hence we get a conflation $Y \rightarrowtail X\oplus C^0\twoheadrightarrow C_f$ which implies $X\oplus C^0\in {\D}^{\bot_1}$. Since ${\D}^{\bot_1}$ is closed under direct summands, we have $X\in {\D}^{\bot_1}$.
\end{proof}

By similar method we can show that if $\overline f:Y\to X$ is a monomorphism in $\overline \h$ such that $X\in {\D}^{\bot_1}$, then $Y\in {\D}^{\bot_1}$.

\begin{prop}\label{Prop3.7}
Functor $F'$ is dense.
\end{prop}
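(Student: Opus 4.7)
The plan is to show that for every object $X$ of the heart $\overline{\h}$ (equivalently, every object of $(\overline{\h})_{\s_{\mathcal A}}$), we can produce an object $Y \in \h_\D$ together with a morphism $\overline{f}\colon Y \to X$ in $\overline{\h}$ lying in $\s_{\mathcal A}$; since such $\overline f$ becomes invertible in the localization, this will give $F'(Y) \simeq L_{\s_{\mathcal A}}(X)$ and prove density.

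First I would fix a representative $X \in \h$ and apply Proposition \ref{lem2.2} to obtain a conflation
$$\xymatrix{Z\ar@{ >->}[r] & Y\ar@{->>}[r]^{f} & X}$$
in $\B$ with $Z \in \D^{\bot_1}$ and $f$ a right $\h_\D$-approximation. Inspection of the proof of Proposition \ref{lem2.2} shows $Y \in \h_\D$, which is exactly what we need in order to evaluate $F'$ at $Y$.

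Next, since $X \in \h$, Corollary \ref{heartepi} guarantees that $\overline f$ is an epimorphism in $\overline{\h}$. Applying the cohomological functor $H$ of Theorem \ref{cohomological} to the conflation yields an exact sequence
$$H(Z)\xrightarrow{H(g)} H(Y)\xrightarrow{H(f)} H(X)\to 0$$
in $\overline{\h}$, where the rightmost zero comes from the fact that $\overline f$ is epi. Hence $\ker(\overline f)$ is the image of $H(g)$, i.e.\ an epimorphic image of $H(Z)$. By the lemma immediately preceding Theorem \ref{thm3.2} we have $H(Z) \in \mathcal A$ (as $Z \in \D^{\bot_1}$), and by Lemma \ref{lem3.6} the subcategory $\mathcal A$ is closed under epimorphisms, so $\ker(\overline f) \in \mathcal A$. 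This says precisely that $\overline f \in \s_{\mathcal A}$, so $L_{\s_{\mathcal A}}(\overline f)$ is invertible and $F'(Y) = L_{\s_{\mathcal A}}(H(Y)) \simeq L_{\s_{\mathcal A}}(X)$, establishing density.

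There is no real obstacle once Proposition \ref{lem2.2}, Corollary \ref{heartepi}, and Lemma \ref{lem3.6} are in place; the proof is essentially an assembly. The only point that deserves a little care is checking that the kernel of $\overline f$ inside $\overline{\h}$ coincides with the image of $H(Z)$, for which one must invoke the exactness half of Theorem \ref{cohomological} together with the fact that $\overline f$ is already known to be an epimorphism.
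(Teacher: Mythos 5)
Your proposal is correct and follows essentially the same route as the paper: apply Proposition \ref{lem2.2} to get the approximation conflation, use Corollary \ref{heartepi} and the exactness of the cohomological functor to see that $\ker(\overline f)$ is an epimorphic image of $H(Z)\in\mathcal A$, and conclude via Lemma \ref{lem3.6} that $\overline f\in\s_{\mathcal A}$, so $Y\simeq X$ in $(\overline\h)_{\s_{\mathcal A}}$. The only cosmetic difference is that the paper first invokes \cite[Lemma 3.1]{LN} to place $Z$ in $\h$ and work with $Z$ itself, whereas you work directly with $H(Z)$, which is equally valid.
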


\begin{proof}
It is enough to show $F$ is dense.\\
By Proposition \ref{lem2.2}, any object $X\in\h$ admits a conflation $\xymatrix{Z\ar@{ >->}[r]^g &Y\ar@{->>}[r]^{f} &X}$ where $f$ is a right $\h_\D$-approximation and $Z\in {\D}^{\bot_1}$. By Corollary \ref{heartepi}, morphism $\overline f$ is an epimorphism in $\overline \h$. By \cite[Lemma 3.1]{LN}, we get $Z\in \h$. Then we have an exact sequence $Z\xrightarrow{\overline g} Y \xrightarrow{\overline f} X\to 0$, there is an epimorphism from $Z$ to the kernel of $\overline f$. By Lemma \ref{lem3.6}, the kernel of $\overline f$ is in ${\D}^{\bot_1}$. Hence $Y\simeq X$ in $(\overline \h)_{\s_{\mathcal A}}$.
\end{proof}

\begin{prop}\label{full2}
Functor $F'$ is full.
\end{prop}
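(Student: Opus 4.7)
The plan is to start with an arbitrary morphism $\phi\colon X\to Y$ in $(\overline\h)_{\s_{\mathcal A}}$, where $X,Y\in\h_\D$, and to construct a preimage $\overline\psi\in\Hom_{\h_\D/\C'}(X,Y)$ under $F'$. First we invoke the right calculus of fractions for $\s_{\mathcal A}$ on $\overline\h$, which is justified because $\mathcal A$ is closed under quotients by Lemma~\ref{lem3.6} (and under subobjects by the dual remark immediately following it), making $\mathcal A$ a Serre subcategory of $\overline\h$. This allows us to write $\phi$ as a roof
\[
X\xleftarrow{\overline s}Z\xrightarrow{\overline f}Y,\qquad\overline s\in\s_{\mathcal A}.
\]

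Next, apply Proposition~\ref{lem2.2} to $Z$ to obtain a conflation $K\rightarrowtail W\xrightarrow{t}Z$ with $W\in\h_\D$, $K\in\D^{\bot_1}$, and $t$ a right $\h_\D$-approximation. Since $Z\in\h$, Corollary~\ref{heartepi} shows that $\overline t$ is an epimorphism in $\overline\h$; its kernel is a quotient of $H(K)\in\mathcal A$, so $\overline t\in\s_{\mathcal A}$ by Lemma~\ref{lem3.6}. Thus $\phi$ is equivalently represented by the roof $X\xleftarrow{\overline{st}}W\xrightarrow{\overline{ft}}Y$, with $W,X,Y$ all in $\h_\D$. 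Setting $g=st$ and $h=ft$, the problem reduces to finding $\psi\colon X\to Y$ in $\h_\D$ such that $\psi\circ g-h$ factors through $\C'$; by Corollary~\ref{cap} it is enough that the difference factor through $\D^{\bot_1}$. Once such $\psi$ is produced, an elementary check on the roof representation gives $F'(\overline\psi)=\phi$.

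To construct $\psi$, I would apply Lemma~\ref{exactsq} to the short exact sequence $0\to\ker(\overline g)\to W\xrightarrow{\overline g}X\to 0$ in $\overline\h$; this yields a conflation $K'\rightarrowtail W\oplus W_X\twoheadrightarrow X$ in $\B$ with $W_X\in\C$, $K'\in\h$, and $\overline{K'}\simeq\ker(\overline g)\in\mathcal A$. The map $\svech{h}{0}\colon W\oplus W_X\to Y$ restricts on $K'$ to a morphism whose image in $\overline\h$ lies in $\mathcal A$. Pushing out the conflation $K'\rightarrowtail W\oplus W_X\twoheadrightarrow X$ along this restriction via Proposition~\ref{1.20} produces a conflation $Y\rightarrowtail E\twoheadrightarrow X$ together with a comparison map $W\oplus W_X\to E$; combining the image-in-$\mathcal A$ condition with Corollary~\ref{cap} and the structure of the cotorsion pair $(\D,\D^{\bot_1})$ can then be used to split $Y\rightarrowtail E\twoheadrightarrow X$ modulo morphisms factoring through $\C'$, and the resulting section provides $\psi$. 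The main obstacle is this final step: converting the abstract fact that $h|_{K'}$ has image in $\mathcal A$ into a concrete splitting modulo $\C'$ of the pushout conflation requires a careful diagram chase through Proposition~\ref{1.20}, together with an explicit use of Corollary~\ref{cap} to descend factorizations through $\D^{\bot_1}$ down to factorizations through the smaller subcategory $\C'=\h_\D\cap\D^{\bot_1}$.
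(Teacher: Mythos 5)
Your reductions at the start are fine in spirit (they mirror the density proof), but the crux of fullness is the construction of $\psi$, and that is exactly the step you leave as a sketch --- and the sketched route does not go through. Pushing out $K'\rightarrowtail W\oplus W_X\twoheadrightarrow X$ along $h|_{K'}$ gives a conflation $Y\rightarrowtail E\twoheadrightarrow X$ whose class is $(h|_{K'})_*\delta\in\EE(X,Y)$, and nothing forces this to vanish, even ``modulo $\C'$'': there is no vanishing of the form $\EE(\h_\D,\D^{\bot_1})=0$ available (for $X\in\CoCone(\D,\C)$ the sequence $\EE(C,T)\to\EE(D,T)\to\EE(X,T)$ gives no control of the last term), and ``splitting a conflation modulo morphisms factoring through $\C'$'' is not a notion the paper's tools produce. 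Moreover, even an honest splitting would not yield the desired $\psi$: what one actually needs is an extension $\theta\colon W\oplus W_X\to Y$ of $h|_{K'}$ that factors through $\D^{\bot_1}$, so that $\svech{h}{0}-\theta$ kills $K'$ and descends --- and producing such a $\theta$ is precisely the missing content. Note also that $\overline{K'}\in\mathcal A$ and ``the image of $\overline{h|_{K'}}$ lies in $\mathcal A$'' are statements in $\overline\h$; they do not by themselves give a factorization of $h|_{K'}$ in $\B$ through $\D^{\bot_1}$.

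The paper avoids this descent problem entirely by working with the other orientation of fraction, $X_1\xrightarrow{\overline x}X_2\xleftarrow{\overline f}Y_2$, and \emph{lifting} $x$ along $f$: after replacing the $\s_{\mathcal A}$-epimorphism by a genuine conflation $Z'\rightarrowtail Y'\twoheadrightarrow X_2$ with $Z'\in\D^{\bot_1}\cap\h$ (Lemma \ref{exactsq}), it uses the conflation $V_1\rightarrowtail U_0\twoheadrightarrow X_1$ with $U_0\in\Omega\C$, $V_1\in\Omega\D$ available because $X_1\in\h_\D$, the surjectivity of $\Hom_\B(U_0,f')$ from Lemma \ref{syzygyepi}, the vanishing of $\Hom_{\B/\mathcal P}(\Omega\D,\D^{\bot_1})$, and Remark \ref{factorsthroughP} to get $x=f'x'$ strictly in $\B$. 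None of these ingredients appear in your proposal, and they are what make the key step work; indeed, the natural way to finish \emph{your} roof $X\xleftarrow{g}W\xrightarrow{h}Y$ is to apply that same lifting argument to $\id_X$ to obtain a section $\sigma$ with $g'\sigma=\id_X$ and set $\psi=h\sigma$, i.e.\ to fall back on the paper's technique rather than the pushout-splitting you propose. Separately, your justification of the single-roof representation is shaky: closure of $\mathcal A$ under quotients and subobjects does not make it Serre (extension-closure is not established), and what is really needed is that $\s_{\mathcal A}$ is a multiplicative system (in particular closed under composition), which you do not verify; the paper is also implicit on this point, but it works with left fractions, which is the orientation directly supported by Lemma \ref{lem3.6} via pushouts.
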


\begin{proof}
It is enough to show $F$ is full.\\
Consider a morphism $\alpha:X_1\to Y_2$ in $(\overline \h)_{\s_{\mathcal A}}$ having the form $X_1\xrightarrow{\overline x} X_2 \xrightarrow {\overline f^{-1}} Y_2$ where $X_1,Y_2\in \h_\D$, by definition we have a short exact sequence $0 \to Z_2\xrightarrow{\overline g} Y_2 \xrightarrow{\overline f} X_2\to 0$ in $\overline \h$ where $Z_2\in {\D}^{\bot_1}$. By Lemma \ref{exactsq}, we have a conflation $\xymatrix{Z_2' \ar@{ >->}[r]^{g'} &Y_2' \ar@{->>}[r]^{f'} &X_2}$ in $\h$ such that $\overline f =\overline f'$, $Y_2'=Y_2$ in $\overline \h$ and its image is isomorphic to the short exact sequence. Hence $Z_2'\in {\D}^{\bot_1}$. By Lemma \ref{syzygyepi}, morphism $\overline f'$ is an epimorphism implies $\Hom_{\B}(U,f')$ is surjective for any $U\in \U$. Since $X_1$ admits a conflation $\xymatrix{ V_1 \ar@{ >->}[r]^u &U_0 \ar@{->>}[r] &X_1}$ where $U_0\in \U$ and ${V_1}\in \V$, we have the following commutative diagram
$$\xymatrix{
{V_1} \ar[d]_z \ar@{ >->}[r]^u &U_0 \ar[d]^y \ar@{->>}[r] &X_1 \ar[d]^x\\
Z_2' \ar@{ >->}[r]^{g'} &Y_2' \ar@{->>}[r]^{f'} &X_2
}
$$
where $z$ factors through $\mathcal P$. Hence by Remark \ref{factorsthroughP}, morphism $z$ factors through $u$, which implies there is a morphism $x':X_1\to Y_2'$ such that $x=f'x'$. Hence we have $\overline x'=\overline f^{-1}\overline x$ in $(\overline \h)_{\s_{\mathcal A}}$, which shows $F$ is full.
\end{proof}

\begin{prop}\label{faithful}
Functor $F'$ is faithful.
\end{prop}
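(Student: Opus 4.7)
The plan is to establish that for any morphism $f: A\to B$ in $\h_\D$, the vanishing $F(\overline f)=0$ in $(\overline\h)_{\s_{\mathcal A}}$ forces $f$ to factor through an object of $\C'$; this is precisely faithfulness of $F'$, since $F'$ identifies morphisms factoring through $\C'$ with zero in $\h_\D/\C'$. As in Proposition \ref{full2}, it suffices to work with $F$ and reduce to this factorization statement.

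First I would verify that $\s_{\mathcal A}$ admits a left calculus of fractions on the abelian category $\overline\h$. Lemma \ref{lem3.6} gives closure of $\mathcal A$ under epimorphic images, the dual remark following it gives closure under subobjects, and standard pullback/pushout manipulations in $\overline\h$ yield the Ore-type condition together with the weak cancellation property. Closure of $\mathcal A$ under extensions, which is needed so that composites of elements of $\s_{\mathcal A}$ remain in $\s_{\mathcal A}$, can be checked by realising a short exact sequence of $\overline\h$ as a conflation in $\B$ via Lemma \ref{exactsq} and using that $\D^{\bot_1}$ is closed under extension (Remark \ref{C3}(c) applied to $(\D,\D^{\bot_1})$). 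This calculus-of-fractions verification is the most delicate step, and is the main obstacle.

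With the left calculus in hand, $F(\overline f)=0$ is equivalent to the existence of an epimorphism $\overline s: B\to B'$ in $\s_{\mathcal A}$ with $\overline s\circ\overline f=0$ in $\overline\h$. Setting $K:=\ker\overline s\in\mathcal A=(\h\cap\D^{\bot_1})/\C$, the abelian structure of $\overline\h$ produces a factorisation $\overline f=\overline\iota\circ\overline h$, where $\overline\iota: K\hookrightarrow B$ is the canonical inclusion. Lifting this to $\B$, I would choose a representative $K'\in\h\cap\D^{\bot_1}$, morphisms $h: A\to K'$ and $\iota: K'\to B$, so that $f-\iota h$ factors through an object of $\C$. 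Since $\D\subseteq\C$ and $\C$ is rigid, $\C\subseteq\D^{\bot_1}$; hence both summands factor through $\D^{\bot_1}$ and so does $f$ itself.

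Finally I would invoke Corollary \ref{cap}: a morphism in $\h_\D$ that factors through $\D^{\bot_1}$ must also factor through $\h_\D\cap\D^{\bot_1}=\C'$. Thus $f$ factors through $\C'$, giving $\overline f=0$ in $\h_\D/\C'$, as required. Once the calculus of fractions is available the rest is a formal consequence of the closure properties of $\C'=\h_\D\cap\D^{\bot_1}$ together with Corollary \ref{cap}; the real work lies in establishing the Ore conditions for $\s_{\mathcal A}$.
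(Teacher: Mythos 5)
Your argument is correct in outline, but it reaches faithfulness by a genuinely different route than the paper. You localize ``internally'': you verify that $\s_{\mathcal A}$ admits a calculus of left fractions on $\overline\h$ (closure under composition via extension-closure of $\mathcal A$, which indeed follows from Lemma \ref{exactsq} together with Remark \ref{C3}(c) for $(\D,\D^{\bot_1})$ and Lemma \ref{lem3.6}; the Ore square via pushout along an epimorphism, whose kernel is an epimorphic image of the old kernel and hence lies in $\mathcal A$ again by Lemma \ref{lem3.6}; the cancellation axiom is vacuous because the members of $\s_{\mathcal A}$ are epimorphisms), and then use the standard criterion that $L_{\s_{\mathcal A}}(\overline f)=0$ iff $\overline s\,\overline f=0$ for some $\overline s\in\s_{\mathcal A}$, so that $\overline f$ factors through $\ker\overline s\in\mathcal A$; lifting to $\B$ and using $\C\subseteq\D^{\bot_1}$ gives a factorization of $f$ through $\D^{\bot_1}$, and Corollary \ref{cap} finishes as in the paper. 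Two points need to be made explicit for this to be complete: the Ore verification itself (which you only sketch, though all the needed ingredients are available as above), and the additivity of $(\overline\h)_{\s_{\mathcal A}}$, which you implicitly use when reducing faithfulness of $F'$ to ``$F(\overline f)=0$ implies $f$ factors through $\C'$''; additivity does follow once the fractions calculus is established, so the reduction is legitimate. The paper avoids both issues: it uses the cohomological functor $H_0$ of the second cotorsion pair $(\D,\D^{\bot_1})$, checks that every $\overline f\in\s_{\mathcal A}$ becomes invertible under the induced functor $K_0:\overline\h\to\h_0/\D$, and so obtains by the universal property a functor $J:(\overline\h)_{\s_{\mathcal A}}\to\h_0/\D$; applying $J$ to $F(\overline u)=F(\overline v)$ yields $H_0(u-v)=0$, hence $u-v$ factors through $\D^{\bot_1}$ by \cite[Proposition 2.22]{LN}, and Corollary \ref{cap} concludes. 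The paper's detour through $J$ is cheaper (no Ore conditions to check) and the functor $J$ is reused later (e.g.\ in Section 5), whereas your approach, once the calculus of fractions is fully written out, yields additional information of independent interest: an explicit co-roof description of $(\overline\h)_{\s_{\mathcal A}}$ and its additivity.
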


\begin{proof}
Since $(\D,\D^{\bot_1})$ is also a cotorsion pair, we denote its heart by $\h_0/\D$ and the associated cohomological functor by $H_0$. Since $H_0(\C^{\bot_1})=0$, we have the following commutative diagram.
$$\xymatrix{
\B \ar[dr]_{H_0} \ar[rr]^{H} &&\overline \h. \ar@{.>}[dl]^{K_0}\\
&\h_0/\D
}$$
Now let $X,Y\in \h_\D$ and $Y\xrightarrow{\overline f}X$ be a morphism in $\s_{\mathcal A}$, then we have a conflation $\xymatrix{Z' \ar@{ >->}[r]^{g'} &Y' \ar@{->>}[r]^{f'} &X}$ in $\h$ such that $\overline f =\overline f'$, $Y'=Y$ in $\overline \h$ and $Z'\in \D^{\bot_1}\cap\h$. Then $H_0(Z')=0$, which means $H_0(f')$ is a monomorphism. Moreover, since $\overline f'=\overline f$ is an epimorphism, we have the following commutative diagram
$$\xymatrix{
Y' \ar[r]^c \ar[d]_{f} &{C^0} \ar[r] \ar[d] &{C^1} \ar@{=}[d]  \\
X \ar[r] &{C_f} \ar[r] &{C^1}
}
$$
where $C^0,C^1,C_f\in \C$. Then we get a conflation $\xymatrix{Y' \ar@{ >->}[r]^-{\svecv{f}{c}} &X\oplus C^0 \ar@{->>}[r] &C_f}$. By applying $H_0$ to this conflation, we get $H_0(f')$ is an epimorphism. Hence $H_0(f')=K_0(\overline f)$ is an isomorphism. By the universal property of $L_{\s_{\mathcal A}}$, there is a functor $J:(\overline \h)_{\s_{\mathcal A}}\to \h_0/\D$ such that $JL_{\s_{\mathcal A}}=K_0$.\\
Now assume we have $X,Y\in \h_\D$ and $u,v\in \Hom_\B(X,Y)$ such that $F(\overline u)=F(\overline v)$, then $H_0(u)=K_0(\overline u)=JL_{\s_{\mathcal A}}(\overline u)=JF(\overline u)=JF(\overline v)=JL_{\s_{\mathcal A}}(\overline v)=K_0(\overline v)=H_0(v)$. Morphism $K_0(\overline a)=0$ if and only if $a$ factors through $\D^{\bot_1}$ by  \cite[Proposition 2.22]{LN}， then $u-v$ factors though $\D^{\bot_1}$. By Corollary \ref{cap}, it factors through $\C'$. Hence $\pi'(\overline u)=\pi'(\overline v+\overline u-\overline v)=\pi'(\overline v)$, which shows $F'$ is faithful.
\end{proof}

Since $F'$ is fully-faithful and dense, it is an equivalence. Now we finished the proof of Theorem \ref{thm3.2}.

Let $ \mathcal N\subset \M'$ be rigid categories such that both $\M'$ and $\mathcal N$ are covariantly finite, closed under direct summands and contain $\mathcal I$. 

Since $({^{\bot_1}}\M',\M')$ is a cotorsion pair, denote $\Cone(\M',\M')$ by $\h'$, the heart of $({^{\bot_1}}\M',\M')$ is $\h'/\M'=:\overline \h'$. 

Let $\h'_{\mathcal N}=\Cone(\M',\mathcal N)$, $H':\B\to \overline \h'$ be the associated cohomological functor. Denote $\h'_{\mathcal N}\cap {{^{\bot_1}}{\mathcal N}}$ by $\M$.

Denote $H'({^{\bot_1}}{\mathcal N})$ by $\mathcal A'$. Let $\s_{\mathcal A'}$ be the class of monomorphisms in $\overline \h'$ whose cokernels belong to $\mathcal A'$.

We have a functor $\eta': \h'_{\mathcal N}/\mathcal N\hookrightarrow  \h'/\mathcal N\twoheadrightarrow \overline \h'$. Let $G$ be composition of functor $\eta$ and the localization functor $L_{\s_{\mathcal A'}}:\overline \h'\to (\overline \h')_{\s_{\mathcal A'}}$. Since $H'(\M)\subseteq \mathcal A'$, we have $G(\M)=0$ in $(\overline \h')_{\s_{\mathcal A'}}$. Hence we have the following commutative diagram.
$$\xymatrix{
\h'_{\mathcal N}/\mathcal N \ar[r]^-{\eta'} \ar@{->>}[d]_{\pi''} &\overline \h' \ar[d]^-{L_{\s_{\mathcal A'}}} \\
\h'_{\mathcal N}/\M \ar[r]^-{{G'}} &(\overline \h')_{\s_{\mathcal A'}}
}
$$
where $\pi''$ is the quotient functor.

\begin{thm}\label{Dthm3.2}
Functor $G':\h'_{\mathcal N}/\M\to (\overline \h')_{\s_{\mathcal A'}}$ is an equivalence.
\end{thm}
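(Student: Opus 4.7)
The plan is to run the proof of Theorem~\ref{thm3.2} verbatim in the opposite extriangulated category. Under this duality the cotorsion pair $({^{\bot_1}}\M',\M')$ plays the role of $(\C,\C^{\bot_1})$, the pair $({^{\bot_1}}{\mathcal N},{\mathcal N})$ plays that of $(\D,\D^{\bot_1})$, $\h'_{\mathcal N}=\Cone(\M',\mathcal N)$ replaces $\h_\D=\CoCone(\D,\C)$, and $\M=\h'_{\mathcal N}\cap {^{\bot_1}\mathcal N}$ replaces $\C'=\h_\D\cap \D^{\bot_1}$. Correspondingly, the class $\s_{\mathcal A'}$ of monomorphisms in $\overline\h'$ with cokernel in $\mathcal A'$ is opposite to $\s_{\mathcal A}$, and each intermediate result used in the proof of Theorem~\ref{thm3.2} admits a dual version by reversing arrows.

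The first step is to establish the dual of Proposition~\ref{lem2.2}: every object $X$ admits a conflation $\xymatrix{X\ar@{ >->}[r]^{f}&Y\ar@{->>}[r]&Z}$ in which $f$ is a left $\h'_{\mathcal N}$-approximation and $Z\in{^{\bot_1}\mathcal N}$, and a morphism $x':X'\to X$ factors through ${^{\bot_1}\M'}$ whenever $fx'$ does. The construction mirrors Proposition~\ref{lem2.2}, starting from the dual of Lemma~\ref{syzygyapproximation} (using an injective hull and the cosyzygy $\Sigma$) and assembling the required pushouts. As consequences (dualising Corollaries~\ref{heartepi} and~\ref{cap}), for $X\in\h'$ the induced $\overline f$ is a monomorphism in $\overline\h'$, and any morphism in $\h'_{\mathcal N}$ factoring through ${^{\bot_1}\mathcal N}$ already factors through $\M$. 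Dualising Lemma~\ref{lem3.6} and its ensuing remark, $\mathcal A'$ is closed under subobjects in $\overline\h'$: a monomorphism $\overline f\colon X\to Y$ in $\overline\h'$ with $Y\in{^{\bot_1}\mathcal N}$ forces $X\in{^{\bot_1}\mathcal N}$.

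With these tools, density of $G'$ follows as in Proposition~\ref{Prop3.7}: for $X\in\h'$ the conflation above yields $Z\in{^{\bot_1}\mathcal N}\cap\h'$, producing an exact sequence $0\to X\xrightarrow{\overline f}Y\to \Coker\overline f$ in $\overline\h'$ together with a monomorphism $\Coker\overline f\hookrightarrow Z$, hence $\Coker\overline f\in\mathcal A'$, $\overline f\in\s_{\mathcal A'}$, and $X\simeq Y$ in $(\overline\h')_{\s_{\mathcal A'}}$. Fullness mirrors Proposition~\ref{full2}: a morphism in $(\overline\h')_{\s_{\mathcal A'}}$ is represented by a fraction involving some $\overline f\in\s_{\mathcal A'}$; realise $\overline f$ by a conflation $\xymatrix{X_2\ar@{ >->}[r]^{f'}&Y_2'\ar@{->>}[r]&Z_2'}$ in $\h'$ via the dual of Lemma~\ref{exactsq}, and then lift an arbitrary morphism $\overline x$ along $\overline f'$ using the dual of Lemma~\ref{syzygyepi}. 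Faithfulness mirrors Proposition~\ref{faithful}: the cohomological functor $H'_0$ associated with the cotorsion pair $({^{\bot_1}\mathcal N},\mathcal N)$ kills ${^{\bot_1}\M'}$, so by the universal property of $L_{\s_{\mathcal A'}}$ it factors as $H'_0=J'\circ L_{\s_{\mathcal A'}}\circ\eta'$; a morphism killed by $G'$ is then killed by $H'_0$, hence factors through ${^{\bot_1}\mathcal N}$ by \cite[Proposition 2.22]{LN}, and therefore through $\M$ by the dual of Corollary~\ref{cap}.

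The main obstacle I expect is verifying the dual of Lemma~\ref{syzygyepi}, whose original argument exploits projectivity of $P$ to lift an outgoing morphism across a deflation. Dualising requires injectivity of an injective object to lift an incoming morphism across an inflation, and this is precisely where the standing hypothesis that $\B$ has enough injectives (together with the rigidity of $\M'$) enters. Once this lifting lemma is checked, each of the four ingredients, namely the dualised Proposition~\ref{lem2.2}, density, fullness and faithfulness, transports verbatim by reversing arrows.
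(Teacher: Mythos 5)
Your proposal is correct and is essentially the paper's own argument: the paper proves Theorem~\ref{Dthm3.2} simply by declaring it the dual of Theorem~\ref{thm3.2}, and your dictionary (with $\M'$, $\mathcal N$, $\h'_{\mathcal N}$, $\M$ playing the roles of $\C$, $\D$, $\h_\D$, $\C'$ in the opposite extriangulated category, and enough injectives supplying the dual lifting in Lemma~\ref{syzygyepi}) is exactly the intended dualization, just spelled out in more detail.
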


\begin{proof}
This is a dual of Theorem \ref{thm3.2}.
\end{proof}

\section{Pseudo-Morita equivalences}

In this section, we prove the main theorem of this paper: 

\begin{thm}\label{main}
Let $(\B,\mathbb{E},\mathfrak{s})$ be an extriangulated category with enough projectives and enough injectives. Assume we have three twin cotorsion pairs $((\C,\C^{\bot_1}),(\C^{\bot_1},\M))$, $(({\D},{\D}^{\bot_1}),({\D}^{\bot_1},{\mathcal N}))$, $((\C',\C'^{\bot_1}),(\C'^{\bot_1},\M'))$ such that $\D\subseteq\C\cap\C'$, $\h_{\D}:=\CoCone({\D},\C)=\CoCone(\C',{\D})$ and $\Cone({\mathcal N},\M)=\Cone(\M',\mathcal N)=:\h_{\mathcal N}'$. Let
\begin{itemize}
\item[(a)] $\overline \h$ be the heart of $(\C,\C^{\bot_1})$. Denote $(\h\cap {\D}^{\bot_1})/\C$ by $\mathcal A$. Let $\s_{\mathcal A}$ be the class of epimorphisms in $\overline \h$ whose kernel belongs to $\mathcal A$.
\item[(b)] $\overline \h'$ be the heart of $(\C'^{\bot_1},\M')$. Denote $(\h' \cap {\D}^{\bot_1})/\M'$ by $\mathcal A'$. Let $\s_{\mathcal A'}$ be the class of monomorphisms in $\overline \h'$ whose cokernel belongs to $\mathcal A'$.
\end{itemize}
Then we have the following equivalences:
$$(\overline \h)_{\s_{\mathcal A}} \simeq \h_{\D}/\C' \simeq \h'_{\mathcal N}/\M \simeq (\overline \h')_{\s_{\mathcal A'}}.$$
\end{thm}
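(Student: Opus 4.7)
The chain of four equivalences decomposes naturally. The outermost equivalences $(\overline{\h})_{\s_{\mathcal A}} \simeq \h_\D/\C'$ and $\h'_\mathcal N/\M \simeq (\overline{\h'})_{\s_{\mathcal A'}}$ are exactly Theorem \ref{thm3.2} and its dual Theorem \ref{Dthm3.2}: the hypothesis $\CoCone(\D,\C) = \CoCone(\C',\D)$ forces $\C' = \h_\D \cap \D^{\bot_1}$ to be the right $\D$-mutation of $\C$ required by Theorem \ref{thm3.2}, while $\Cone(\mathcal N,\M) = \Cone(\M',\mathcal N)$ dually supplies the input to Theorem \ref{Dthm3.2} on the $(\C'^{\bot_1},\M')$-side. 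So the only genuinely new work is the middle equivalence $\h_\D/\C' \simeq \h'_\mathcal N/\M$.

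To build this middle equivalence I would construct mutually quasi-inverse functors $\Phi : \h_\D/\C' \to \h'_\mathcal N/\M$ and $\Psi : \h'_\mathcal N/\M \to \h_\D/\C'$. For $\Phi$, given $X \in \h_\D$, use the second cotorsion pair of the twin $((\C,\C^{\bot_1}),(\C^{\bot_1},\M))$ to choose a conflation $X \rightarrowtail M_X \twoheadrightarrow T_X$ with $M_X \in \M$ and $T_X \in \C^{\bot_1}$, and set $\Phi(X) := T_X$. To see that $T_X$ lies in $\h'_\mathcal N$, I would combine this with the conflation $X \rightarrowtail D \twoheadrightarrow C$ coming from $X \in \CoCone(\D,\C)$: applying Proposition \ref{1.20} yields an intermediate object $E$ with conflations $M_X \rightarrowtail E \twoheadrightarrow C$ and $D \rightarrowtail E \twoheadrightarrow T_X$. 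Rigidity of $\C$ (so $\C \subseteq \C^{\bot_1}$), together with the identity $\M = (\C^{\bot_1})^{\bot_1}$, forces the first conflation to split, hence $E \simeq M_X \oplus C$; feeding this into the second conflation, and invoking the cotorsion pair $(\D^{\bot_1},\mathcal N)$ along with the hypothesis $\Cone(\mathcal N,\M) = \Cone(\M',\mathcal N)$, exhibits $T_X$ as an object of $\h'_\mathcal N$. A Schanuel-type argument shows $T_X$ is well-defined in $\h'_\mathcal N/\M$ independently of the chosen completion, and a parallel $3\times 3$ check shows $\Phi(\C') \subseteq \M$, so $\Phi$ descends to the quotient.

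The inverse $\Psi$ is built dually using the cotorsion pair $(\D,\D^{\bot_1})$ applied to objects of $\h'_\mathcal N$, with the rigidity of $\D$ and the hypothesis $\CoCone(\D,\C) = \CoCone(\C',\D)$ playing the symmetric roles. The identities $\Psi\Phi \simeq \id$ and $\Phi\Psi \simeq \id$ are then obtained from one more pair of $3\times 3$ arguments which identify $X$ with $\Psi\Phi(X)$ modulo a summand in $\C'$ (and dually on the other side). The main obstacle is precisely the combinatorial orchestration of all these $3\times 3$ diagrams: the three twin cotorsion pair hypotheses, the rigidity of $\C, \C', \D$, and both mutation identities must cooperate simultaneously, with the middle cotorsion pair $(\D^{\bot_1},\mathcal N)$ acting as the essential bridge between the two mutation conditions, and care is required to ensure that the relevant $\EE$-classes vanish at each splitting step.
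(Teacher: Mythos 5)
Your reduction of the theorem is exactly the paper's: the two outer equivalences are Theorem \ref{thm3.2} and its dual Theorem \ref{Dthm3.2}, entered via $\C'=\h_\D\cap\D^{\bot_1}$ and $\M=\h'_{\mathcal N}\cap{}^{\bot_1}\mathcal N$, so the real content is indeed the middle equivalence $\h_\D/\C'\simeq\h'_{\mathcal N}/\M$. The gap is in your construction of that middle functor. Setting $\Phi(X)=T_X$ for a conflation $X\rightarrowtail M_X\twoheadrightarrow T_X$ with $M_X\in\M$, $T_X\in\C^{\bot_1}$ is a relative suspension, and such a functor cannot give the required equivalence. Concretely, let $\B$ be a $2$-Calabi--Yau triangulated category (viewed as extriangulated, with $\mathcal P=\mathcal I=0$) and $\D=\C=\C'=\add T$ for a cluster-tilting object $T$; then $\C^{\bot_1}=\C$ and $\M=\mathcal N=\M'=\C$, all hypotheses of the theorem hold, and $\h_\D/\C'=\B/\C\simeq\mod\End_\B(T)^{\op}\simeq\h'_{\mathcal N}/\M$ is nonzero. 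But in this case your $T_X$ lies in $\C^{\bot_1}=\C=\M$ for every $X$, so $\Phi$ is the zero functor on a nonzero category: it is not dense and $\Psi\Phi\not\simeq\id$, so no Schanuel-type or $3\times 3$ bookkeeping can repair it. The conceptual point is that the desired functor must be ``identity-like'' (fixing, up to the ideals being killed, objects lying in both $\h_\D$ and $\h'_{\mathcal N}$), whereas a cone construction shifts every object into $\C^{\bot_1}$.

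What the paper does instead: it uses the twin cotorsion pair $((\C^{\bot_1},\M),(\D^{\bot_1},\mathcal N))$, whose associated $\B^+$ (Definition \ref{C5}) is $\h'_{\mathcal N}$, and the right adjoint $\sigma^+$ of the inclusion $\h'_{\mathcal N}/\M\hookrightarrow\B/\M$ from \cite[Definition 2.21]{LN}: every $B$ admits a \emph{reflection} conflation $B\overset{f}{\rightarrowtail}B^+\twoheadrightarrow S$ with $B^+\in\h'_{\mathcal N}$, $S\in\C^{\bot_1}$ and $f$ a left $\h'_{\mathcal N}$-approximation, and $K(B):=B^+$ descends to $\h_\D/\C'\to\h'_{\mathcal N}/\M$ because $\C'\subseteq\D^{\bot_1}$. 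Note that $B^+$ receives an inflation from $B$ with cokernel in $\C^{\bot_1}$; it is not the cone of $B\rightarrowtail M_B$. Dually, the twin cotorsion pair $((\D,\D^{\bot_1}),(\C',\C'^{\bot_1}))$ has $\B^-=\h_\D$ and its coreflection conflations give $K':\h'_{\mathcal N}/\M\to\h_\D/\C'$. Finally, the natural isomorphisms $K'K\simeq\id$ and $KK'\simeq\id$ are not obtained by direct diagram chases: the paper shows that the comparison morphisms (the reflection of $B$ and the coreflection of $B^+$) become isomorphisms after applying $L_{\s_{\mathcal A}}H$, and then transports these isomorphisms back along the already established equivalence $F'$ of Theorem \ref{thm3.2}, using Propositions \ref{full2} and \ref{faithful}, to produce the isomorphism $B\simeq K'K(B)$ naturally in $B$. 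So the localization theorem you correctly quote for the outer equivalences is also the engine driving the middle one; your proposal is missing both the correct functor and this mechanism for comparing it with the identity.
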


Inspire by \cite{MP}, we call the equivalence $(\overline \h)_{\s_{\mathcal A}} \simeq (\overline \h')_{\s_{\mathcal A'}}$ \emph{pseudo-Morita equivalence}.

\begin{rem}
If $\C$, $\C'$ and $\D$ are subcategories satisfying condition (RCP) and $D \subseteq \C\cap \C'$, then $\CoCone({\D},\C)=\CoCone(\C',{\D})$ if and only if $\C'=\CoCone({\D},\C)\cap \D^{\bot_1}$.
\end{rem}

\begin{rem}
By \cite[Proposition 3.12, 4.15]{LN}, heart $\overline \h'$ is equivalent to the heart of $(\C',\C'^{\bot_1})$, which is equivalent to $\mod(\C'/\mathcal P)$. We also have $\overline \h\simeq \mod(\C/\mathcal P)$.
\end{rem}

According to the previous results, we only need to show $\h_{\D}/\C' \simeq \h'_{\mathcal N}/\M$.







Since $((\C^{\bot_1},\M),(\D^{\bot_1},\mathcal N))$ is a twin cotorsion pair, by Definition  \ref{C5} we have a subcategory $\B^+$ associated with this twin cotorsion pair. In fact $\B^+=\h'_\mathcal N$. By \cite[Definition 2.21]{LN}, the inclusion functor $i^+\colon\B^+/\M=\h'_{\mathcal N}/\M\hookrightarrow\B/\M$ has a right adjoint functor $\sigma^+$ such that every object $B$ admits the following commutative diagram
$$\xymatrix{
V_B \ar@{ >->}[r] \ar@{=}[d] &U_B \ar@{->>}[r] \ar@{ >->}[d] &B \ar@{ >->}[d]^f\\
V_B \ar@{ >->}[r] &T \ar@{->>}[r] \ar@{->>}[d] &B^+ \ar@{->>}[d]\\
&S \ar@{=}[r] &S
}
$$
where $U_B\in \D^{\bot_1}, V_B\in \mathcal N, T\in \M, S\in \C^{\bot_1}$ and $\sigma^+(B)=B^+$, morphism $f$ is a left $\h'_{\mathcal N}$-approximation. Let $H_0'$ be the cohomological functor associated with the heart of $(\D^{\bot_1},\mathcal N)$, then $H_0'(f)$ is an isomorphism. We call the conflation $\xymatrix{B \ar@{ >->}[r]^f &B^+ \ar@{->>}[r] &S}$ a \emph{reflection conflation} of $B$. For every object $B$, we fix a reflection conflation of it. Then for any morphism $x\colon B\rightarrow C$, we define $\sigma^+(\overline x)$ as the unique image of the morphism which makes the following diagram commute (see \cite[Definition 2.21]{LN}).
$$\xymatrix{
B \ar[r]^{x} \ar[d]_{{f}} &C \ar[d]^{ {g}}\\
B^+ \ar@{-->}[r]_{x^+} &C^+.
}
$$
By \cite[Proposition 2.22]{LN}, morphism $\sigma^+(\overline f)=0$ if and only if $f$ factors through $\D^{\bot_1}$. Since $\C'\subseteq \D^{\bot_1}$, we have the following commutative diagram
$$\xymatrix{
\B \ar[d]_{\pi_\M} \ar[r]^{\pi_{\C'}} &\B/\C' \ar@{.>}[d]^{\overline \sigma^+}\\
\B/\M \ar[r]_{\sigma^+} &\h'_{\mathcal N}/\M
}
$$
where $\pi_{\C'}$ and $\pi_{\M}$ are quotient functors. Hence we have a functor $K:\h_\D/\C'\hookrightarrow \B/\C' \xrightarrow{\overline \sigma^+} \h'_{\mathcal N}/\M$.\\
On the other hand, since we have a twin cotorsion pair $((\D,\D^{\bot_1}),(\C',{\C'}^{\bot_1}))$, by the definition we have a subcategory $\B^-$ associated with this twin cotorsion pair. By \cite[Definition 2.21]{LN}, the inclusion $i^-\colon\B^-/\C'=\h_\D/\C'\hookrightarrow\B/\C'$ has a left adjoint functor $\sigma^-$ such that every object $B$ admits a conflation $\xymatrix{V \ar@{ >->}[r] &B^- \ar@{->>}[r]^{f'} &B}$ where $\sigma^-(B)=B^-$, $V\in {\C'}^{\bot_1}$ and $f$ is a right $\h_\D$-approximation, we call this conflation a \emph{coreflection conflation} of $B$. Then dually we have a functor $K':\h'_{\mathcal N}/\M \to \h_{\D}/\C'$. We will prove that $K'K\simeq \id_{\h_{\D}/\C'}$, and dually we can get $KK'\simeq \id_{\h'_{\mathcal N}/\M}$, which shows $\h'_{\mathcal N}/\M$ and ${\h_\D}/\C'$ are equivalent.

\begin{prop}
There is a natural isomorphism between functors $K'K$ and $\id_{\h_{\D}/\C'}$.
\end{prop}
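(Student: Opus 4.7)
The plan is to construct, for each $X\in\h_\D$, a morphism $\mu_X\colon X\to(X^+)^-$ assembling into a natural transformation $\mu\colon\id_{\h_\D/\C'}\Rightarrow K'K$, and then to verify that each $\mu_X$ is an isomorphism in $\h_\D/\C'$.

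For the construction, the reflection conflation of $X$ reads $X\xrightarrow{\alpha_X}X^+\twoheadrightarrow S$ with $S\in\C^{\bot_1}$, while the coreflection conflation of $X^+$ reads $V\rightarrowtail(X^+)^-\xrightarrow{\rho}X^+$ with $V\in\C'^{\bot_1}$ and $\rho$ a right $\h_\D$-approximation. Since $X\in\h_\D$, the morphism $\alpha_X$ lifts through $\rho$ to produce $\mu_X$ with $\rho\mu_X=\alpha_X$. Naturality of $\mu$ in $\h_\D/\C'$ uses functoriality of $\sigma^+$ and $\sigma^-$ combined with Corollary~\ref{cap}: given $f\colon X\to Y$ in $\h_\D$, the difference $\mu_Y f-K'K(f)\mu_X$ post-composed with $\rho_{Y^+}$ vanishes modulo $\M$ by naturality of the reflection unit, hence factors through $V\in\C'^{\bot_1}\subseteq\D^{\bot_1}$, and Corollary~\ref{cap} lifts this factorization to one through $\C'$.

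The decisive step is showing each $\mu_X$ is an isomorphism in $\h_\D/\C'$. Applying the dual of Proposition~\ref{1.20} (a 3$\times$3 pullback construction) to the composable deflations $(X^+)^-\xrightarrow{\rho}X^+\xrightarrow{\beta_X}S$ yields an object $K$ fitting into conflations $V\rightarrowtail K\twoheadrightarrow X$ and $K\rightarrowtail(X^+)^-\twoheadrightarrow S$, together with a commutative pullback-type square. The identity $\rho\mu_X=\alpha_X$, combined with the weak universal property of this square, produces a section $X\to K$ of the deflation $K\twoheadrightarrow X$. Hence $K\simeq X\oplus V$, and $\mu_X$ factors as a split inflation $X\hookrightarrow K$ followed by the inflation $K\rightarrowtail(X^+)^-$, so is itself an inflation. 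The octahedral axiom applied to $X\rightarrowtail K\rightarrowtail(X^+)^-$ then yields a conflation $X\rightarrowtail(X^+)^-\twoheadrightarrow Q$ together with a conflation $V\rightarrowtail Q\twoheadrightarrow S$.

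It remains to verify $Q\in\C'=\h_\D\cap\D^{\bot_1}$ and to extract the inverse. Since $\D\subseteq\C\cap\C'$, both $V\in\C'^{\bot_1}$ and $S\in\C^{\bot_1}$ lie in $\D^{\bot_1}$, and extension-closure (Remark~\ref{C3}(c)) gives $Q\in\D^{\bot_1}$; combining $X\rightarrowtail(X^+)^-\twoheadrightarrow Q$ with an $\h_\D$-witnessing conflation for $(X^+)^-$ via an octahedron places $Q\in\h_\D=\CoCone(\D,\C)$. With $Q\in\C'$, the conflation $X\rightarrowtail(X^+)^-\twoheadrightarrow Q$ yields an inverse $\psi\colon(X^+)^-\to X$ in $\h_\D/\C'$: exploiting the rigidity of $\C'$ and the right $\h_\D$-approximation structure built into the coreflection conflation, one produces $\psi$ with $\psi\mu_X\equiv\id_X$ and $\mu_X\psi\equiv\id_{(X^+)^-}$ modulo $\C'$. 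The main obstacle is precisely this last step: since $\h_\D/\C'$ is only an ideal quotient (not abelian or exact), splitting cannot be inferred by abstract exactness, and $\psi$ must be concretely constructed using the approximation data; this is where the rigidity hypothesis and the specific extension $V\rightarrowtail Q\twoheadrightarrow S$ play their role.
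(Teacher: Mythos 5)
Your route differs fundamentally from the paper's: the paper never analyses cones of the unit map inside $\B$. Instead it shows that both $L_{\s_{\mathcal A}}H(f)\colon B\to H(B^+)$ and $L_{\s_{\mathcal A}}H(f')\colon K'K(B)\to H(B^+)$ are isomorphisms in $(\overline \h)_{\s_{\mathcal A}}$ (for the first one: $H_0'(f)$ invertible forces $\Omega S\to B$ to factor through $\D^{\bot_1}$, and Lemma \ref{lem3.6} then puts $\Ker H(f)$ in $\mathcal A$), and transports the isomorphism $B\simeq K'K(B)$, together with naturality, back through the fully faithful functor $F'$ of Theorem \ref{thm3.2} (Propositions \ref{full2}, \ref{faithful}). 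Your proposal instead tries to produce an explicit conflation $X\rightarrowtail (X^+)^-\twoheadrightarrow Q$ with $Q\in\C'$ and to split $\mu_X$ modulo $\C'$; the construction of $\mu_X$, the fact that it is an inflation, the conflation $V\rightarrowtail Q\twoheadrightarrow S$, and (with compatible strict choices of representatives for $x^+$ and $y$) the naturality argument via Corollary \ref{cap} are all fine.

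However, there are two genuine gaps. First, the claim $Q\in\h_\D$ is not established: the octahedron applied to $X\rightarrowtail (X^+)^-\rightarrowtail D$ (where $(X^+)^-\rightarrowtail D\twoheadrightarrow C$, $D\in\D$, $C\in\C$, witnesses $(X^+)^-\in\h_\D$) yields a conflation $Q\rightarrowtail E\twoheadrightarrow C$ in which $E$ is the cone of the composite $X\rightarrowtail D$, and there is no reason for $E$ to lie in $\D$; so this does not place $Q$ in $\CoCone(\D,\C)$, and I see no reason why $Q\in\C'$ should hold in general. Second, and decisively, even granting $Q\in\C'$ you do not construct the inverse $\psi$: a conflation $X\rightarrowtail (X^+)^-\twoheadrightarrow Q$ with $Q\in\C'$ does not by itself make $\overline{\mu_X}$ invertible in the ideal quotient $\h_\D/\C'$, since the extension need not split --- rigidity of $\C'$ only kills $\EE(Q,\D)$, and $\EE(Q,X)$ can still be nonzero through the image of $\Hom(Q,C)$. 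You acknowledge this yourself (``the main obstacle is precisely this last step''), but that step is exactly the content of the proposition, so the argument is incomplete at its crucial point. The paper avoids the issue entirely by proving invertibility after applying $L_{\s_{\mathcal A}}H$ and pulling it back along the equivalence $F'$; if you want a direct proof in $\B$, you would need an actual construction of a two-sided inverse modulo $\C'$, not just control of the cone.
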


\begin{proof}
Let $B\in \h_\D$, we have a reflection conflation $\xymatrix{B \ar@{ >->}[r]^{f} &B^+ \ar@{->>}[r]^g &S}$, let $\xymatrix{V \ar@{ >->}[r] &B' \ar@{->>}[r]^{f'} &B^+}$ be a coreflection conflation of $B^+$. By the proof of Proposition \ref{Prop3.7}, we have $L_{\s_{\mathcal A}}H(f'):B'\xrightarrow{\simeq} H(B^+)$ in $(\overline \h)_{\s_{\mathcal A}}$. Note that $K'K(B)=B'$ in $\h_{\D}/\C'$.\\
We have the following commutative diagram
$$\xymatrix{
\Omega S \ar[d]_s \ar@{ >->}[r]^{q_S} &P_S \ar[d] \ar@{->>}[r]^{p_S} &S \ar@{=}[d]\\
B \ar@{ >->}[r]^{f} &B^+ \ar@{->>}[r]^g &S
}
$$
which induces a conflation $\xymatrix{\Omega S \ar@{ >->}[r]^-{\svecv{s}{q_S}} &B\oplus P_S \ar@{->>}[r]^-{\svech{f}{-p'}} &B^+}$. Since $H_0'(f)$ is an isomorphism, we have $H_0'(s)=0$, which implies $s$ factors through $\D^{\bot_1}$. Object $\Omega S$ admits a conflation $\Omega S\rightarrowtail {T} \twoheadrightarrow D_1$ where ${T} \in \D^{\bot_1}$ and $D_1\in \D$, hence $s$ factors through $T$. We have the following commutative diagram in $\overline \h$ by applying $H$:
$$\xymatrix{
H(\Omega S) \ar[rr]^-{H(s)} \ar@{->>}[dr] \ar@{->>}[ddr] && B \ar[r]^-{H(f)} &H(B^+) \ar[r] &0.\\
&H(T) \ar@{.>}[d]^{\overline g} \ar[ur]\\
&\Ker(H(f)) \ar@{ >->}[uur]
}
$$
It implies $\overline g$ is an epimorphism. Hence by Lemma \ref{lem3.6}, $\Ker(H(f))\in \mathcal A$. This implies $L_{\s_{\mathcal A}}H(f): B\xrightarrow{\simeq} H(B^+)$ in $(\overline \h)_{\s_{\mathcal A}}$.\\
Let $x:B_0\to B_1$ be a morphism in $\h_\D$, denote its image in $\h_{\mathcal D}/\C'$ by ${\underline x}$, then we have the following commutative diagram:
$$\xymatrix{
B_0 \ar[r]^{f_0} \ar[d]_x &B_0^+ \ar[d]^{x^+} &B_0' \ar[l]_{f_0'} \ar[d]^y\\
B_1 \ar[r]_{f_1} &{B_1}^+  &B_1' \ar[l]^{f_1'}
}
$$
where the image of $y$ in ${\h_D}/\C'$ is $K'K({\underline x})$. Since $L_{\s_{\mathcal A}}H(f_i)$ and $L_{\s_{\mathcal A}}H(f_i')$ are invertible, $i\in \{0,1\}$, by Proposition \ref{full2} we have isomorphisms ${\underline {b_i}}:B_i\to K'K(B_i)$ such that ${F'}( {\underline {b_i}})=L_{\s_{\mathcal A}}H(f_i')^{-1}L_{\s_{\mathcal A}}H(f_i)$. Then we have the following commutative diagram in $\h_{\mathcal D}/\C'$:
$$\xymatrix{
B_0 \ar[r]^-{{\underline {b_0}}}_-{\simeq} \ar[d]_{{\underline x}}  &K'K(B_0)  \ar[d]^{K'K({\underline x})}\\
B_1 \ar[r]_-{{\underline {b_1}}}^-{\simeq}  &K'K(B_1)
}
$$
Hence $K'K\simeq \id_{\h_{\D}/\C'}$.
\end{proof}

Dually we can show $KK'\simeq \id_{\h'_{\mathcal N}/\M}$. Now we finished the proof of Theorem \ref{main}.


\section{More Localizations}

Let $\D\subset\C$ be subcategories satisfying (RCP), let $\h_{\D}\cap \D^{\bot_1}=\C'$ (the same as in section 3).

Let $f:Y\to X$ be a morphism in $\B$, then it admits the following commutative diagram ($\diamond$):
$$\xymatrix{
\Omega X \ar@{ >->}[r] \ar@{=}[d] &Z_1 \ar[d] \ar@{->>}[r]^{g} &Y \ar[d]^f \ar@{ >->}[r] &I^Y \ar[d] \ar@{->>}[r] &\Sigma Y \ar@{=}[d]\\
\Omega X \ar@{ >->}[r] &P_X \ar@{->>}[r] &X \ar@{ >->}[r]^h &Z_2 \ar@{->>}[r] &\Sigma Y
}
$$
where $P_X\in \mathcal P$ and $I^{Y}\in \mathcal I$. Let $\widetilde{\R_1}$ be the class of morphisms $f$ there is a commutative diagram ($\diamond$) where $g$ factors through $\D^{\bot_1}$ and $h$ factors through ${\C}^{\bot_1}$. Let ${\R_1}$ be the class of morphisms $f$ such that there is a commutative diagram ($\diamond$) where $Z_1\in \D^{\bot_1}$ and $h$ factors through ${\C}^{\bot_1}$. Then $\widetilde{\R_1}\supseteq \R_1$. Let $\B_{\R_1}$ (resp. $\B_{\widetilde{\R_1}}$) be the localization of $\B$ at $\R_1$ (resp. $\widetilde{\R_1}$) and $L_{\R_1}$ (resp. $L_{\widetilde{\R_1}}$) be the localization functor. 
If $f\in \widetilde{\R_1}$, object $Z_1$ admits a conflation $Z_1\rightarrowtail {T_1} \twoheadrightarrow \D_1$ where ${T_1} \in \D^{\bot_1}$ and $D_1\in \D$, hence $g$ factors through ${T_1}$. We have the following commutative diagram in $\overline \h$ by applying $H$:
$$\xymatrix@C=0.6cm@R0.6cm{
H(Z_1) \ar[rr]^-{H(g)} \ar@{->>}[dr] \ar@{->>}[ddr] && H(Y) \ar[r]^-{H(f)} &H(X) \ar[r] &0\\
&H({T_1}) \ar@{.>}[d]^{\overline s} \ar[ur]\\
&\Ker(H(f)) \ar@{ >->}[uur]
}
$$
which implies $\overline s$ is an epimorphism. Hence by Lemma \ref{lem3.6}, $\Ker(H(f))\in \mathcal A$. This implies $H(f)\in \s_{\mathcal A}$. Then we have the following commutative diagram.
$$\xymatrix@C=0.6cm@R0.6cm{
\B \ar[dr]^-{L_{{\R_1}}} \ar[dddr]_-{L_{\widetilde{\R_1}}} \ar[rr]^-{L_{\s_{\mathcal A}}H} &&(\overline \h)_{\s_{\mathcal A}}\\
&\B_{{\R_1}} \ar[ur]^{{G_1}} \ar[dd]^J \\
\\
&\B_{\widetilde{\R_1}} \ar[uuur]_{\widetilde{G_1}}
}
$$
We will show that ${G_1}$ is an equivalence. This implies $L_{{\R_1}}$ inverts all the morphism in $\widetilde{\R_1}$, then we have a unique functor $I:\B_{\widetilde{\R_1}}\to \B_{{\R_1}}$ such that $L_{{\R_1}}=L_{\widetilde{\R_1}}I$. Hence $JI=\id$ and $IJ=\id$, which means $\widetilde{G_1}$ is also an equivalence.

\begin{rem}
In the conflation $\xymatrix{Z\ar@{ >->}[r] &Y\ar@{->>}[r]^{f} &X}$ in Proposition \ref{lem2.2}, morphism $f\in \mathcal R_1$.
\end{rem}

The following theorem together with the arguments above generalize \cite[Theorem 3.19]{MP}.



\begin{thm}\label{equivalent2}
Functor ${G_1}:\B_{{\R_1}}\to (\overline \h)_{\s_{\mathcal A}}$ is an equivalence.
\end{thm}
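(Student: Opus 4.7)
The plan is to compare $G_1$ with the equivalence $F': \h_\D/\C' \xrightarrow{\simeq} (\overline{\h})_{\s_{\mathcal{A}}}$ already established in Theorem \ref{thm3.2}, using the relation $F' \circ \pi' = L_{\s_{\mathcal{A}}} \circ H|_{\h_\D}$ together with the commutative triangle $G_1 \circ L_{\R_1} = L_{\s_{\mathcal{A}}} \circ H$ set up just before the theorem, and to verify essential surjectivity, fullness, and faithfulness for $G_1$. For any $X \in \B$, Proposition \ref{lem2.2} together with the remark preceding the theorem yields a deflation $Y \twoheadrightarrow X$ with $Y \in \h_\D$ that lies in $\R_1$, so every object of $\B_{\R_1}$ is isomorphic to some $L_{\R_1}(Y)$ with $Y \in \h_\D$. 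This secures essential surjectivity of $G_1$ and reduces the remaining checks to morphisms whose source and target lie in $\h_\D$.

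Fullness then follows quickly: for $Y, X \in \h_\D$ and $\alpha: G_1(L_{\R_1}(Y)) \to G_1(L_{\R_1}(X))$, Proposition \ref{full2} furnishes a $\beta \in \Hom_{\h_\D/\C'}(Y, X)$ with $F'(\beta) = \alpha$, and any lift $f: Y \to X$ in $\h_\D$ satisfies $G_1(L_{\R_1}(f)) = L_{\s_{\mathcal{A}}} H(f) = F'(\pi'(f)) = \alpha$. For faithfulness, suppose $u, v: Y \to X$ in $\h_\D$ become equal under $G_1 L_{\R_1}$; Proposition \ref{faithful} then forces $u - v$ to factor through some object $C \in \C'$. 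The crux is to deduce that $L_{\R_1}(C) \simeq 0$: apply the diagram $(\diamond)$ to the unique morphism $C \to 0$ with $P_X = 0$ (hence $\Omega X = 0$), so the top conflation forces $Z_1 = C$, which lies in $\D^{\bot_1}$ because $\C' \subseteq \D^{\bot_1}$ (and $\mathcal P \subseteq \C \subseteq \D^{\bot_1}$ by the rigidity of $\C$), while the lower-half morphism $h: 0 \to Z_2$ of $(\diamond)$ is itself zero and hence trivially factors through $\C^{\bot_1}$. Therefore $C \to 0$ belongs to $\R_1$, so $L_{\R_1}(C) \simeq 0$ in $\B_{\R_1}$, and any factorisation of $u - v$ through $C$ vanishes after applying $L_{\R_1}$.

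The main obstacle is this faithfulness step, where one must reconcile three different equivalence relations on the morphisms of $\B$: equality in $(\overline{\h})_{\s_{\mathcal A}}$, factorisation through $\C'$, and equality in the Gabriel--Zisman localization $\B_{\R_1}$. The bridge is the containment $\C' \subseteq \D^{\bot_1}$, which is precisely what allows a diagram $(\diamond)$ witnessing $(C \to 0) \in \R_1$ to be built, thereby collapsing factorisations through $\C'$ to zero in $\B_{\R_1}$ and recovering the stronger equality that the equivalence $F'$ only encodes modulo the class $\s_{\mathcal A}$.
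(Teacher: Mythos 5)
Your proof follows essentially the same route as the paper's: density via the identification $G_1\circ L_{\R_1}=L_{\s_{\mathcal A}}\circ H$ (together with the reduction to $\h_\D$ coming from Proposition \ref{lem2.2}), fullness by conjugating with the isomorphisms $G_1(L_{\R_1}(f))$ for the approximation deflations $f\in\R_1$ and invoking Proposition \ref{full2}, and faithfulness by using Proposition \ref{faithful} (with Corollary \ref{cap}) to factor the difference through $\C'$ and then observing that this factorization dies in $\B_{\R_1}$. Your explicit check that $C\to 0$ admits a diagram $(\diamond)$ with $Z_1=C\in\D^{\bot_1}$, so that $L_{\R_1}(\C')\simeq 0$, makes precise a step the paper leaves implicit; otherwise the two arguments coincide, including the shared tacit reduction of faithfulness to morphisms in the image of $L_{\R_1}$.
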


\begin{proof}
Since $H|_{\h}=\pi|_{\h}$, we get ${G_1}$ is dense. We show ${G_1}$ is full.\\
Let $\alpha: {G_1}(X_1)\to {G_1}(X_2)$ be a morphism. By Proposition \ref{lem2.2}, for $i\in \{1,2\}$, $X_i$ admits a conflation $\xymatrix{Z_i\ar@{ >->}[r] &Y_i\ar@{->>}[r]^{f_i} &X_i}$ where $f_i\in {\R_1}$ and $Y_i\in {\h_\D}$, then we have an isomorphism $L_{\s_{\mathcal A}}H(f_i)$ and $L_{\s_{\mathcal A}}H(f_1)\alpha L_{\s_{\mathcal A}}H(f_2)^{-1}:Y_1\to Y_2$. By Proposition \ref{full2}, there exists a morphism $g:Y_1\to Y_2$ such that $L_{\s_{\mathcal A}}H(g)=L_{\s_{\mathcal A}}H(f_1)\alpha L_{\s_{\mathcal A}}H(f_2)^{-1}$. Hence $\alpha=L_{\s_{\mathcal A}}H(f_1)^{-1}L_{\s_{\mathcal A}}H(g)L_{\s_{\mathcal A}}H(f_2)={G_1}(f_1^{-1}gf_2)$.\\
We show ${G_1}$ is faithful. It is enough to check ${G_1}L_{{\R_1}}(a_1)={G_1}L_{{\R_1}}(a_2)$ implies $a_1=a_2$ in $\B_{{\R_1}}$ where $a_1,a_2$ are morphisms from $X_1$ to $X_2$ in $\B$.\\
By the discussion above, we have $b_i:Y_1\to Y_2$ such that $f_2b_i=a_if_1$, $i\in \{1,2\}$. Hence $$L_{\s_{\mathcal A}}H(b_1)=L_{\s_{\mathcal A}}H(f_2)^{-1}L_{\s_{\mathcal A}}H(a_1)L_{\s_{\mathcal A}}H(f_1)=L_{\s_{\mathcal A}}H(f_2)^{-1}L_{\s_{\mathcal A}}H(a_2)L_{\s_{\mathcal A}}H(f_1)=L_{\s_{\mathcal A}}H(b_2)$$
This implies $b_1-b_2$ factors through $\C'\subseteq \D^{\bot_1}$ by Proposition \ref{faithful}.
Hence we have $b_1=b_2+(b_1-b_2)=b_2$ and $a_1=f^{-1}_1b_1f_2=f^{-1}_1b_2f_2=a_2$ in $\B_{{\R_1}}$.
\end{proof}

Now let ${\R_0}$ be the class of morphisms $f$ such that there is a commutative diagram ($\diamond$) where $Z_1\in \C^{\bot_1}$ and $h$ factors through ${\C}^{\bot_1}$, let ${\R_2}$ be the class of morphisms $f$ such that there is a commutative diagram ($\diamond$) where $Z_1\in \D^{\bot_1}$ and $h$ factors through ${\D}^{\bot_1}$. Then $\R_0\subseteq \R_1\subseteq \R_2$. Let $\B_{\R_0}$ (resp. $\B_{\R_2}$) be the localization of $\B$ at $\R_0$ (resp. $\R_2$) and $L_{\R_0}$ (resp. $L_{\R_2}$) be the localization functor. Since $H(f)$ (resp. $H_0(f)$) is an isomorphism if $f\in \R_0$ (resp. $f\in \R_2$) , We have the following commutative diagram:


$$\xymatrix@C=0.4cm@R0.4cm{
\B \ar[dr]^H \ar[dd]_{L_{\R_0}} \ar[rrr]^-{H_0} &&&\h_0/\D\\
&\overline \h \ar[urr]^-{K_0} \ar[dr]^{L_{\s_{\mathcal A}}}\\
\B_{\R_0} \ar[ur]^{G_0} \ar[d]_{L_1} &&(\overline \h)_{\s_{\mathcal A}} \ar[uur]_J\\
\B_{\R_1} \ar[rrr]_{L_2} \ar[urr]^{G_1} &&& \B_{\R_2} \ar[uuu]_{G_2}
}
$$

where $L_1L_{\R_0}=L_{\R_1}$ and $L_2L_1L_{\R_0}=L_{\R_2}$.

\begin{rem}
By the similar method as Proposition \ref{lem2.2}, we can show the following statement: \\
Any object $X$ admits a conflation $\xymatrix{Z\ar@{ >->}[r] &Y\ar@{->>}[r]^{f} &X}$ where $f$ is a right ${\h}$-approximation (resp. $\h_0$-approximation) and $Z\in {\C}^{\bot_1}$ (resp. ${\D}^{\bot_1}$). Moreover, morphism  $x:X\to X'$ factors through $\C^{\bot_1}$ (resp. ${\D}^{\bot_1}$) if $xf$ factors through $\C^{\bot_1}$ (resp. $\D^{\bot_1}$).\\
One can check morphism $f$ in the statement belongs to $\R_0$ (resp. $\R_2$). Then by the similar method as Theorem \ref{equivalent2}, we can show that $G_0$ (resp. $G_2$) is an equivalence.\\
The detail of the proofs is left to the readers.
\end{rem}

\section{Exmaple}

In the last section we give an exmaple of our result in module category.

\begin{exm}\label{ex1}
Let $\Lambda$ be the $k$-algebra given by the quiver
$$\xymatrix@C=0.4cm@R0.4cm{
&&3 \ar[dl]\\
&5 \ar[dl] \ar@{.}[rr] &&2 \ar[dl] \ar[ul]\\
6 \ar@{.}[rr] &&4 \ar[ul] \ar@{.}[rr] &&1 \ar[ul]}$$
with mesh relations. The AR-quiver of $\B:=\mod\Lambda$ is given by
$$\xymatrix@C=0.4cm@R0.4cm{
&&{\begin{smallmatrix}
3&&\\
&5&\\
&&6
\end{smallmatrix}} \ar[dr] &&&&&&{\begin{smallmatrix}
1&&\\
&2&\\
&&3
\end{smallmatrix}} \ar[dr]\\
&{\begin{smallmatrix}
5&&\\
&6&
\end{smallmatrix}} \ar[ur] \ar@{.}[rr] \ar[dr] &&{\begin{smallmatrix}
3&&\\
&5&
\end{smallmatrix}} \ar@{.}[rr] \ar[dr] &&{\begin{smallmatrix}
4
\end{smallmatrix}} \ar@{.}[rr] \ar[dr] &&{\begin{smallmatrix}
2&&\\
&3&
\end{smallmatrix}} \ar[ur] \ar@{.}[rr] \ar[dr] &&{\begin{smallmatrix}
1&&\\
&2&
\end{smallmatrix}} \ar[dr]\\
{\begin{smallmatrix}
6
\end{smallmatrix}} \ar[ur] \ar@{.}[rr] &&{\begin{smallmatrix}
5
\end{smallmatrix}} \ar[ur] \ar@{.}[rr] \ar[dr] &&{\begin{smallmatrix}
3&&4\\
&5&
\end{smallmatrix}} \ar[ur] \ar[r] \ar[dr] \ar@{.}@/^15pt/[rr] &{\begin{smallmatrix}
&2&\\
3&&4\\
&5&
\end{smallmatrix}} \ar[r] &{\begin{smallmatrix}
&2&\\
3&&4
\end{smallmatrix}} \ar[ur] \ar@{.}[rr] \ar[dr] &&{\begin{smallmatrix}
2
\end{smallmatrix}} \ar[ur] \ar@{.}[rr] &&{\begin{smallmatrix}
1
\end{smallmatrix}}.\\
&&&{\begin{smallmatrix}
4&&\\
&5&
\end{smallmatrix}} \ar[ur] \ar@{.}[rr] &&{\begin{smallmatrix}
3
\end{smallmatrix}} \ar[ur] \ar@{.}[rr] &&{\begin{smallmatrix}
2&&\\
&4&
\end{smallmatrix}} \ar[ur]
}$$
We denote by ``$\circ$" in the AR-quiver the indecomposable objects belong to a subcategory and by ``$\cdot$'' the indecomposable objects do not belong to it.
$$\xymatrix@C=0.2cm@R0.2cm{
&&&\circ \ar[dr] &&&&&&\circ \ar[dr]\\
{\C:} &&\circ \ar[ur]  \ar[dr] &&\cdot  \ar[dr] &&\cdot  \ar[dr] &&\cdot  \ar[ur]  \ar[dr] &&\circ \ar[dr]\\
&\circ \ar[ur]  &&\cdot \ar[ur]  \ar[dr] &&\cdot \ar[ur] \ar[r] \ar[dr] &\circ \ar[r] &\cdot \ar[ur] \ar[dr] &&\cdot \ar[ur] &&\circ\\
&&&&\circ \ar[ur] &&\cdot \ar[ur] &&\circ \ar[ur]
\\} \quad
\xymatrix@C=0.2cm@R0.2cm{
&&&\circ \ar[dr] &&&&&&\circ \ar[dr]\\
{\C^{\bot_1}:} &&\circ \ar[ur]  \ar[dr] &&\cdot  \ar[dr] &&\circ  \ar[dr] &&\cdot  \ar[ur]  \ar[dr] &&\circ \ar[dr]\\
&\circ \ar[ur]  &&\circ \ar[ur]  \ar[dr] &&\cdot \ar[ur] \ar[r] \ar[dr] &\circ \ar[r] &\cdot \ar[ur] \ar[dr] &&\cdot \ar[ur] &&\circ\\
&&&&\circ \ar[ur] &&\cdot \ar[ur] &&\circ \ar[ur]
}$$

$$\xymatrix@C=0.2cm@R0.2cm{
&&&\circ \ar[dr] &&&&&&\circ \ar[dr]\\
{\D:} &&\circ \ar[ur]  \ar[dr] &&\cdot  \ar[dr] &&\cdot  \ar[dr] &&\cdot  \ar[ur]  \ar[dr] &&\circ \ar[dr]\\
&\circ \ar[ur]  &&\cdot \ar[ur]  \ar[dr] &&\cdot \ar[ur] \ar[r] \ar[dr] &\circ \ar[r] &\cdot \ar[ur] \ar[dr] &&\cdot \ar[ur] &&\cdot\\
&&&&\circ \ar[ur] &&\cdot \ar[ur] &&\circ \ar[ur]
\\} \quad
\xymatrix@C=0.2cm@R0.2cm{
&&&\circ \ar[dr] &&&&&&\circ \ar[dr]\\
{{\D}^{\bot_1}:} &&\circ \ar[ur]  \ar[dr] &&\cdot  \ar[dr] &&\circ  \ar[dr] &&\cdot  \ar[ur]  \ar[dr] &&\circ \ar[dr]\\
&\circ \ar[ur]  &&\circ \ar[ur]  \ar[dr] &&\cdot \ar[ur] \ar[r] \ar[dr] &\circ \ar[r] &\cdot \ar[ur] \ar[dr] &&\circ \ar[ur] &&\circ\\
&&&&\circ \ar[ur] &&\cdot \ar[ur] &&\circ \ar[ur]
}$$

$$\xymatrix@C=0.2cm@R0.2cm{
&&&\circ \ar[dr] &&&&&&\circ \ar[dr]\\
{\C':} &&\circ \ar[ur]  \ar[dr] &&\cdot  \ar[dr] &&\cdot  \ar[dr] &&\cdot  \ar[ur]  \ar[dr] &&\circ \ar[dr]\\
&\circ \ar[ur]  &&\cdot \ar[ur]  \ar[dr] &&\cdot \ar[ur] \ar[r] \ar[dr] &\circ \ar[r] &\cdot \ar[ur] \ar[dr] &&\circ \ar[ur] &&\cdot\\
&&&&\circ \ar[ur] &&\cdot \ar[ur] &&\circ \ar[ur]
\\} \quad
\xymatrix@C=0.2cm@R0.2cm{
&&&\circ \ar[dr] &&&&&&\circ \ar[dr]\\
{{\C'}^{\bot_1}:} &&\circ \ar[ur]  \ar[dr] &&\cdot  \ar[dr] &&\cdot  \ar[dr] &&\cdot  \ar[ur]  \ar[dr] &&\circ \ar[dr]\\
&\circ \ar[ur]  &&\circ \ar[ur]  \ar[dr] &&\cdot \ar[ur] \ar[r] \ar[dr] &\circ \ar[r] &\cdot \ar[ur] \ar[dr] &&\circ \ar[ur] &&\circ\\
&&&&\circ \ar[ur] &&\cdot \ar[ur] &&\circ \ar[ur]
}$$
$$\xymatrix@C=0.2cm@R0.2cm{
&&&\circ \ar[dr] &&&&&&\circ \ar[dr]\\
{\M:} &&\circ \ar[ur]  \ar[dr] &&\cdot  \ar[dr] &&\circ  \ar[dr] &&\cdot  \ar[ur]  \ar[dr] &&\circ \ar[dr]\\
&\cdot \ar[ur]  &&\cdot \ar[ur]  \ar[dr] &&\cdot \ar[ur] \ar[r] \ar[dr] &\circ \ar[r] &\cdot \ar[ur] \ar[dr] &&\cdot \ar[ur] &&\circ\\
&&&&\circ \ar[ur] &&\cdot \ar[ur] &&\circ \ar[ur]
\\} \quad
\xymatrix@C=0.2cm@R0.2cm{
&&&\circ \ar[dr] &&&&&&\circ \ar[dr]\\
{\mathcal N:} &&\circ \ar[ur]  \ar[dr] &&\cdot  \ar[dr] &&\cdot  \ar[dr] &&\cdot  \ar[ur]  \ar[dr] &&\circ \ar[dr]\\
&\cdot \ar[ur]  &&\cdot \ar[ur]  \ar[dr] &&\cdot \ar[ur] \ar[r] \ar[dr] &\circ \ar[r] &\cdot \ar[ur] \ar[dr] &&\cdot \ar[ur] &&\circ\\
&&&&\circ \ar[ur] &&\cdot \ar[ur] &&\circ \ar[ur]
}$$
$$
\xymatrix@C=0.2cm@R0.2cm{
&&&\circ \ar[dr] &&&&&&\circ \ar[dr]\\
{\M':} &&\circ \ar[ur]  \ar[dr] &&\cdot  \ar[dr] &&\cdot  \ar[dr] &&\cdot  \ar[ur]  \ar[dr] &&\circ \ar[dr]\\
&\cdot \ar[ur]  &&\circ \ar[ur]  \ar[dr] &&\cdot \ar[ur] \ar[r] \ar[dr] &\circ \ar[r] &\cdot \ar[ur] \ar[dr] &&\cdot \ar[ur] &&\circ\\
&&&&\circ \ar[ur] &&\cdot \ar[ur] &&\circ \ar[ur]
}
$$


The heart of $(\C,\C^{\bot_1})$ is the following:

$$\xymatrix@C=0.4cm@R0.4cm{
&&&{\begin{smallmatrix}
2&\ \\
&3
\end{smallmatrix}}\ar[dr]\\
{\begin{smallmatrix}
3&&\ \\
&5&
\end{smallmatrix}}\ar[dr] \ar@{.}[rr]
&&{\begin{smallmatrix}
&2&\ \\
3&&4
\end{smallmatrix}}\ar[ur] \ar@{.}[rr]
&&{\begin{smallmatrix}
\ &2&\
\end{smallmatrix}},\\
&{\begin{smallmatrix}
\ &3&\
\end{smallmatrix}} \ar[ur]}$$
and the heart of $(\C'^{\bot_1},\M')$ is the following:
$$\xymatrix@C=0.4cm@R0.4cm{
{\begin{smallmatrix}
3&\ \\
&5
\end{smallmatrix}} \ar[dr] \ar@{.}[rr] 
&&{\begin{smallmatrix}
\ &4&\ 
\end{smallmatrix}} \ar@{.}[rr] \ar[dr]
&&{\begin{smallmatrix}
2&\ \\
&3
\end{smallmatrix}}.\\
&{\begin{smallmatrix}
3&&4\ \\
&5&
\end{smallmatrix}}\ar[dr] \ar@{.}[rr] \ar[ur]
&&{\begin{smallmatrix}
&2&\ \\
3&&4
\end{smallmatrix}}\ar[ur] \\
&&{\begin{smallmatrix}
\ &3&\
\end{smallmatrix}} \ar[ur]}$$

Obviously they are not equivalent, but if we take localization as in the last section, we get 
$$\xymatrix@C=0.2cm@R0.2cm{
&&&&{\begin{smallmatrix}
2&\ \\
&3
\end{smallmatrix}}\\
(\overline \h)_{\s_{\mathcal A}}: &{\begin{smallmatrix}
3&&\ \\
&5&
\end{smallmatrix}}\ar[dr] \ar@{.}[rr]
&&{\begin{smallmatrix}
&2&\ \\
3&&4
\end{smallmatrix}} \ar[ur] \\
&&{\begin{smallmatrix}
\ &3&\
\end{smallmatrix}} \ar[ur]\\}\quad
\xymatrix@C=0.2cm@R0.2cm{
&&&&{\begin{smallmatrix}
2&\ \\
&3
\end{smallmatrix}}
\\
(\overline \h')_{\s_{\mathcal A'}}: &{\begin{smallmatrix}
3&&4\ \\
&5&
\end{smallmatrix}}\ar[dr] \ar@{.}[rr] 
&&{\begin{smallmatrix}
&2&\ \\
3&&4
\end{smallmatrix}} \ar[ur] \\
&&{\begin{smallmatrix}
\ &3&\
\end{smallmatrix}} \ar[ur]}$$

\end{exm}

When we consider the right $\D$-mutation of $\C$, we can not always get a rigid subcategory $\C'=\CoCone(\D,\C)\cap \D^{\bot_1}$. See the following exmaple in $\mod \Lambda$ of Exmaple \ref{ex1}.

\begin{exm}
$$\xymatrix@C=0.2cm@R0.2cm{
&&&\circ \ar[dr] &&&&&&\circ \ar[dr]\\
{\C:} &&\circ \ar[ur]  \ar[dr] &&\cdot  \ar[dr] &&\circ  \ar[dr] &&\cdot  \ar[ur]  \ar[dr] &&\circ \ar[dr]\\
&\circ \ar[ur]  &&\cdot \ar[ur]  \ar[dr] &&\cdot \ar[ur] \ar[r] \ar[dr] &\circ \ar[r] &\cdot \ar[ur] \ar[dr] &&\cdot \ar[ur] &&\circ,\\
&&&&\circ \ar[ur] &&\cdot \ar[ur] &&\circ \ar[ur]
\\} \quad
\xymatrix@C=0.2cm@R0.2cm{
&&&\circ \ar[dr] &&&&&&\circ \ar[dr]\\
{\D:} &&\circ \ar[ur]  \ar[dr] &&\cdot  \ar[dr] &&\cdot  \ar[dr] &&\cdot  \ar[ur]  \ar[dr] &&\circ \ar[dr]\\
&\circ \ar[ur]  &&\cdot \ar[ur]  \ar[dr] &&\cdot \ar[ur] \ar[r] \ar[dr] &\circ \ar[r] &\cdot \ar[ur] \ar[dr] &&\cdot \ar[ur] &&\circ,\\
&&&&\circ \ar[ur] &&\cdot \ar[ur] &&\circ \ar[ur]
}$$
$$\xymatrix@C=0.2cm@R0.2cm{
&&&\circ \ar[dr] &&&&&&\circ \ar[dr]\\
{\C':} &&\circ \ar[ur]  \ar[dr] &&\cdot  \ar[dr] &&\cdot  \ar[dr] &&\cdot  \ar[ur]  \ar[dr] &&\circ \ar[dr]\\
&\circ \ar[ur]  &&\circ \ar[ur]  \ar[dr] &&\cdot \ar[ur] \ar[r] \ar[dr] &\circ \ar[r] &\cdot \ar[ur] \ar[dr] &&\cdot \ar[ur] &&\circ.\\
&&&&\circ \ar[ur] &&\cdot \ar[ur] &&\circ \ar[ur]
}$$
\end{exm}

But in the following cases, we can always get a rigid subcategory $\C'$.

\begin{prop}
Let $\C$, $\D$ be subcategories satisfying condition (RCP) and $\D\subset \C$. Let $\C'=\CoCone(\D,\C)\cap \D^{\bot_1}$. In the folloiwng cases, $\C'$ is rigid:
\begin{itemize}
\item[(a)] $\B$ is a Krull-Schmidt, $k$-linear, Hom-finite, 2-Calabi-Yau triangulated category, $\D$ is funtorially finite;

\item[(b)] $\EE^2(\C,\D)=0$.
\end{itemize}

\end{prop}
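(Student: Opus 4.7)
The plan is to treat the two cases separately; in each, I fix arbitrary $X, X' \in \C'$ together with the defining conflations $X \rightarrowtail D_0 \twoheadrightarrow C_0$ and $X' \rightarrowtail D_1 \twoheadrightarrow C_1$ (with $D_i \in \D$, $C_i \in \C$) coming from $\C' \subseteq \CoCone(\D,\C)$, and aim to show $\EE(X, X') = 0$.

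For case (b), I would apply $\EE(-, X')$ to the first conflation; since $X' \in \D^{\bot_1}$, the term $\EE(D_0, X')$ vanishes, so the problem reduces to showing $\EE^2(C_0, X') = 0$. For this, apply $\EE(C_0, -)$ to the conflation for $X'$ and look at the piece
$$\EE(C_0, C_1) \longrightarrow \EE^2(C_0, X') \longrightarrow \EE^2(C_0, D_1)$$
of the long exact sequence; the left term vanishes by rigidity of $\C$ and the right term by the hypothesis $\EE^2(\C, \D) = 0$. Hence $\EE^2(C_0, X') = 0$ and consequently $\EE(X, X') = 0$.

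For case (a), $\B$ is triangulated with $\EE(A, B) = \Hom_\B(A, \Sigma B)$, and the 2-Calabi-Yau property ensures $\EE(A, B) = 0$ if and only if $\EE(B, A) = 0$; in particular $\D^{\bot_1} = {^{\bot_1}}\D$. Applying $\Hom_\B(X, -)$ to the triangle for $X'$ and using $\EE(X, D_1) = 0$ (from $X \in {^{\bot_1}}\D$), the desired vanishing $\EE(X, X') = \Hom_\B(X, \Sigma X') = 0$ becomes equivalent to the statement that every morphism $f \colon X \to C_1$ factors through $D_1 \to C_1$. Given such an $f$, I would construct the factorization by two successive lifts: first apply $\Hom_\B(-, C_1)$ to the triangle for $X$ and use $\EE(C_0, C_1) = 0$ (rigidity of $\C$) to lift $f$ to some $\widetilde f \colon D_0 \to C_1$; then apply $\Hom_\B(D_0, -)$ to the triangle for $X'$ and use $\EE(D_0, X') = 0$ (from $X' \in \D^{\bot_1}$) to lift $\widetilde f$ further to $D_0 \to D_1$. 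Composing with $X \to D_0$ yields the required factorization of $f$ through $D_1$.

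The arguments are in the end routine long-exact-sequence chases; the main point in (a) is to correctly exploit 2-Calabi-Yau duality to obtain $\D^{\bot_1} = {^{\bot_1}}\D$, which is precisely what enables the second lift above, while in (b) the whole content lies in combining rigidity of $\C$ with the hypothesis $\EE^2(\C, \D) = 0$ to annihilate the obstruction $\EE^2(C_0, X')$. I do not anticipate any genuine obstacle beyond these manipulations; functorial finiteness of $\D$ in (a) enters only indirectly, via its role in guaranteeing that $\C' = \CoCone(\D,\C)\cap\D^{\bot_1}$ admits the defining triangles used above.
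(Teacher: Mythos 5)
Your argument for case (b) is correct and is essentially the paper's own proof: the same two pieces of long exact sequence (rigidity of $\C$ and $\EE^2(\C,\D)=0$ killing $\EE^2(C_0,X')$, then $\EE(D_0,X')=0$ killing $\EE(X,X')$), only organized with a fixed pair $X,X'$ instead of the category-level statements $\EE^2(\C,\C')=0$ and $\EE(\C',\C')=0$. Case (a) is where you genuinely diverge: the paper simply invokes Zhou--Zhu's mutation theorem for cotorsion pairs in $2$-Calabi--Yau triangulated categories (\cite[Theorem 3.3]{ZZ}), which produces a cotorsion pair $(\C',\C'^{\bot_1})$ with $\C'\cap\C'^{\bot_1}=\C'$ and hence rigidity, and this is precisely where the functorial finiteness of $\D$ is needed. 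Your route instead uses Serre duality only through the symmetry $\EE(X,\D)=0\Leftrightarrow\EE(\D,X)=0$, makes the connecting map $\Hom_\B(X,C_1)\to\Hom_\B(X,\Sigma X')$ surjective, and kills it by the two lifting steps (via $\EE(C_0,C_1)=0$ and $\EE(D_0,X')=0$); this checks out and has the advantage of being self-contained and of not using functorial finiteness of $\D$ at all. One small correction to your closing remark: the defining triangles for objects of $\C'$ exist by the very definition of $\CoCone(\D,\C)$, so functorial finiteness does not enter your proof even ``indirectly''---in the paper it is an honest hypothesis because it is required by the cited mutation theorem, whereas your argument shows it can be dropped for the rigidity statement itself.
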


\begin{proof}
(a) By the assumptions, we have a cotorsion pair $(\C,\C^{\bot_1})$. By \cite[Theorem 3.3]{ZZ}, we get a cotorsion pair $(\C',\C'^{\bot_1})$ such that $\C'\cap \C'^{\bot_1}=\C'$, which implies $\C'$ is rigid.

(b) Let $C'\in \C'$, it amdits a conflation $(\star) \xymatrix{C' \ar@{ >->}[r] &D \ar@{->>}[r]^{f} &C}$ where $D\in \D$ and $C\in \C$. 
By applying $\EE(\C,-)$ to conflation $(\star)$, we get an exact sequence $0=\EE(\C,C)\to \EE^2(\C,C')\to \EE^2(\C,D)=0$, hence $\EE^2(\C,C')=0$. Since $C'$ is arbitary, we get $\EE^2(\C,\C')=0$. Then we apply $\EE(-,\C')$ to conflation $(\star)$, we get an exact sequence $0=\EE(D,\C')\to \EE(C',\C')\to \EE^2(C,\C')=0$, hence $\EE(C',\C')=0$. Since $C'$ is arbitary, we get $\C'$ is rigid.
\end{proof}




\end{document}